\newtheorem{theorem}{Theorem}
\newtheorem{claim}{Claim}
\newtheorem{lemma}[theorem]{Lemma}
\newtheorem{conjecture}[theorem]{Conjecture}
\author{Michael A. Henning\affiliationmark{1}\thanks{Research supported in part by the University of Johannesburg.}
  \and Elena Mohr\affiliationmark{2}
  \and Dieter Rautenbach\affiliationmark{2}}
\title[On the maximum number of minimum total dominating sets in forests]{On the maximum number of minimum total dominating sets in forests}
\affiliation{
Department of Pure and Applied Mathematics, 
University of Johannesburg, 
South Africa\\
Institute of Optimization and Operations Research,
Ulm University, 
Germany}
\keywords{Tree, forest, total domination, domination}
\begin{document}
\publicationdetails{21}{2019}{3}{3}{4787}
\maketitle
\begin{abstract}
We propose the conjecture that 
every tree with order $n$ at least $2$ 
and total domination number $\gamma_t$ 
has at most 
$\left(\frac{n-\frac{\gamma_t}{2}}{\frac{\gamma_t}{2}}\right)^{\frac{\gamma_t}{2}}$
minimum total dominating sets.
As a relaxation of this conjecture, 
we show that 
every forest $F$ with order $n$, no isolated vertex,
and total domination number $\gamma_t$ 
has at most 
$$\min\left\{\left(8\sqrt{e}\, \right)^{\gamma_t}\left(\frac{n-\frac{\gamma_t}{2}}{\frac{\gamma_t}{2}}\right)^{\frac{\gamma_t}{2}},
(1+\sqrt{2})^{n-\gamma_t},1.4865^n\right\}$$
minimum total dominating sets.
\end{abstract}

\section{Introduction}

A set $D$ of vertices of a graph $G$ is a {\it dominating set} of $G$ if every vertex of $G$ that is not in $D$ has a neighbor in $D$,
and $D$ is a {\it total dominating set} of $G$ if every vertex of $G$ has a neighbor in $D$.
The minimum cardinalities of a dominating set of $G$
and a total dominating set of $G$ 
are the well studied \cite{hahesl,heye}
{\it domination number $\gamma(G)$} of $G$ 
and the
{\it total domination number $\gamma_t(G)$} of $G$,
respectively.
A (total) dominating set is {\it minimal} 
if no proper subset is a (total) dominating set.
A dominating set of $G$ of cardinality $\gamma(G)$
is a {\it minimum dominating set} of $G$,
and 
a total dominating set of $G$ of cardinality $\gamma_t(G)$
is a {\it minimum total dominating set} 
or {\it $\gamma_t$-set} of $G$.
For a graph $G$, let $\sharp\gamma_t(G)$
be the number of minimum total dominating sets of $G$.

Providing a negative answer to a question 
of Fricke et al.~\cite{frhehehu},
Bie\'{n} \cite{bi} showed that 
trees with domination number $\gamma$
can have more than $2^\gamma$ minimum dominating sets.
In fact, Bie\'{n}'s example allows 
to construct forests with  domination number $\gamma$
that have up to $2.0598^{\gamma}$ 
minimum dominating sets.
In \cite{aldamora} Alvarado et al.~showed that 
every forest with domination number $\gamma$
has at most $2.4606^{\gamma}$ 
minimum dominating sets, and 
they conjectured
that every tree with domination number $\gamma$
has $O\left(\frac{\gamma 2^{\gamma}}{\ln \gamma}\right)$
minimum dominating sets.
In the present paper we consider analogous problems
for total domination, which turns out to behave quite differently.
As shown by the star $K_{1,n-1}$ which has 
total domination number $2$ 
but $n-1$ minimum total dominating sets,
the number of minimum total dominating sets
of a tree is not bounded in terms of its total domination number alone, but in terms of both the order and the total domination number. 
In Figure \ref{fig1} we illustrate 
what we believe to be the structure of trees $T$ 
with given order $n$ at least $2$ and total domination number $\gamma_t$
that maximize $\sharp\gamma_t(T)$.

\begin{figure}[H]
\begin{center}
\unitlength 0.7mm 
\linethickness{0.4pt}
\ifx\plotpoint\undefined\newsavebox{\plotpoint}\fi 
\begin{picture}(92,40)(0,0)
\put(1,5){\circle*{1.5}}
\put(21,5){\circle*{1.5}}
\put(41,5){\circle*{1.5}}
\put(81,5){\circle*{1.5}}
\put(11,5){\circle*{1.5}}
\put(31,5){\circle*{1.5}}
\put(51,5){\circle*{1.5}}
\put(91,5){\circle*{1.5}}
\put(6,5){\makebox(0,0)[cc]{$\ldots$}}
\put(6,0){\makebox(0,0)[cc]{$\underbrace{\hspace{1.0cm}}_{{\ell}_1}$}}
\put(26,5){\makebox(0,0)[cc]{$\ldots$}}
\put(26,0){\makebox(0,0)[cc]{$\underbrace{\hspace{1.0cm}}_{{\ell}_2}$}}
\put(46,5){\makebox(0,0)[cc]{$\ldots$}}
\put(46,0){\makebox(0,0)[cc]{$\underbrace{\hspace{1.0cm}}_{{\ell}_3}$}}
\put(86,5){\makebox(0,0)[cc]{$\ldots$}}
\put(86,0){\makebox(0,0)[cc]{$\underbrace{\hspace{1.0cm}}_{{\ell}_k}$}}
\put(6,15){\circle*{1.5}}
\put(26,15){\circle*{1.5}}
\put(46,15){\circle*{1.5}}
\put(86,15){\circle*{1.5}}
\put(6,25){\circle*{1.5}}
\put(26,25){\circle*{1.5}}
\put(46,25){\circle*{1.5}}
\put(86,25){\circle*{1.5}}
\put(6,25){\line(0,-1){10}}
\put(26,25){\line(0,-1){10}}
\put(46,25){\line(0,-1){10}}
\put(86,25){\line(0,-1){10}}
\put(6,15){\line(1,-2){5}}
\put(26,15){\line(1,-2){5}}
\put(46,15){\line(1,-2){5}}
\put(86,15){\line(1,-2){5}}
\put(6,15){\line(-1,-2){5}}
\put(26,15){\line(-1,-2){5}}
\put(46,15){\line(-1,-2){5}}
\put(86,15){\line(-1,-2){5}}
\put(26,25){\line(-1,0){20}}
\qbezier(46,25)(26,35)(6,25)
\qbezier(6,25)(46,45)(86,25)
\put(66,15){\makebox(0,0)[cc]{$\ldots$}}
\end{picture}
\hspace{5mm}
\linethickness{0.4pt}
\ifx\plotpoint\undefined\newsavebox{\plotpoint}\fi 
\begin{picture}(91.75,40)(0,0)
\put(21,5){\circle*{1.5}}
\put(41,5){\circle*{1.5}}
\put(81,5){\circle*{1.5}}
\put(31,5){\circle*{1.5}}
\put(51,5){\circle*{1.5}}
\put(91,5){\circle*{1.5}}
\put(26,5){\makebox(0,0)[cc]{$\ldots$}}
\put(26,0){\makebox(0,0)[cc]{$\underbrace{\hspace{1.0cm}}_{{\ell}_1}$}}
\put(46,5){\makebox(0,0)[cc]{$\ldots$}}
\put(46,0){\makebox(0,0)[cc]{$\underbrace{\hspace{1.0cm}}_{{\ell}_2}$}}
\put(86,5){\makebox(0,0)[cc]{$\ldots$}}
\put(86,0){\makebox(0,0)[cc]{$\underbrace{\hspace{1.0cm}}_{{\ell}_k}$}}
\put(6,15){\circle*{1.5}}
\put(26,15){\circle*{1.5}}
\put(46,15){\circle*{1.5}}
\put(86,15){\circle*{1.5}}
\put(6,25){\circle*{1.5}}
\put(26,25){\circle*{1.5}}
\put(46,25){\circle*{1.5}}
\put(86,25){\circle*{1.5}}
\put(6,25){\line(0,-1){10}}
\put(26,25){\line(0,-1){10}}
\put(46,25){\line(0,-1){10}}
\put(86,25){\line(0,-1){10}}
\put(26,15){\line(1,-2){5}}
\put(46,15){\line(1,-2){5}}
\put(86,15){\line(1,-2){5}}
\put(26,15){\line(-1,-2){5}}
\put(46,15){\line(-1,-2){5}}
\put(86,15){\line(-1,-2){5}}
\put(26,25){\line(-1,0){20}}
\qbezier(46,25)(26,35)(6,25)
\qbezier(6,25)(46,45)(86,25)
\put(66,15){\makebox(0,0)[cc]{$\ldots$}}
\end{picture}
\end{center}
\caption{
For the tree $T_{\rm even}$ on the left, 
we have $k=\frac{\gamma_t}{2}$,
$1\leq \ell_1,\ldots,\ell_k$, and $(\ell_1+1)+\ldots+(\ell_k+1)=n-k$,
while 
for the tree $T_{\rm odd}$ on the right, 
we have $k=\frac{\gamma_t-1}{2}$,
$1\leq \ell_1,\ldots,\ell_k$, and $(\ell_1+1)+\ldots+(\ell_k+1)=n-k-2$.}\label{fig1}
\end{figure}
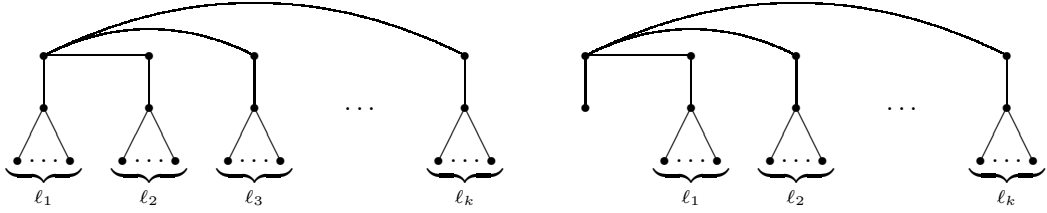
If $\gamma_t$ is even, say $\gamma_t=2k$,
then the tree $T_{\rm even}$ in the left of Figure \ref{fig1} satisfies
$$\sharp\gamma_t(T_{\rm even})=\prod_{i=1}^k(\ell_i+1)\leq 
\left(\frac{n-\frac{\gamma_t}{2}}{\frac{\gamma_t}{2}}\right)^{\frac{\gamma_t}{2}},$$
where we use that the geometric mean is at most the arithmetic mean.
Similarly,
if $\gamma_t$ is odd, say $\gamma_t=2k+1$,
then the tree $T_{\rm odd}$ in the right of Figure \ref{fig1} satisfies
$$\sharp\gamma_t(T_{\rm odd})=
\sum_{i=1}^k
\left(\prod_{j=1}^{i-1}\ell_j
\prod_{j=i+1}^k(\ell_j+1)\right)
\leq 
k\left(\frac{n-k-4}{k-1}\right)^{k-1}
=
\left(\frac{\gamma_t-1}{2}\right)
\left(\frac{n-\left(\frac{\gamma_t+7}{2}\right)}{\frac{\gamma_t-3}{2}}\right)^{\frac{\gamma_t-3}{2}}.$$
In view of these estimates, we pose the following.

\begin{conjecture}\label{conjecture1}
If a tree $T$ has order $n$ at least $2$ and 
total domination number $\gamma_t$,
then 
$$\sharp\gamma_t(T)\leq \left(\frac{n-\frac{\gamma_t}{2}}{\frac{\gamma_t}{2}}\right)^{\frac{\gamma_t}{2}}.$$
\end{conjecture}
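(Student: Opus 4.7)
The plan is to prove Conjecture~\ref{conjecture1} by strong induction on $n$, mirroring the recursive block structure of the extremal trees $T_{\rm even}$ and $T_{\rm odd}$ in Figure~\ref{fig1}. Small base cases ($\gamma_t\leq 3$) can be checked directly. Because the conjectured bound is tight, the induction probably needs to be strengthened to a suitable class of forests (or to a tree with a distinguished ``root status''), so that block removal leaves a smaller instance of the same type.

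For the inductive step, I would root $T$ at the far endpoint of a longest path $v_0v_1\cdots v_d$, so that every child of $v_1$ other than $v_0$ is a leaf. Let $\ell\geq 1$ be the number of leaves at $v_1$. The principal case, which matches the extremal structure, is when $v_2$ has no further children: then $v_1$ belongs to every $\gamma_t$-set, its dominator can be any of the $\ell$ leaves or $v_2$, and each such choice extends independently to a $\gamma_t$-set of the tree $T'$ obtained by deleting $v_1$, $v_2$, and all leaves of $v_1$. This yields $\sharp\gamma_t(T) \leq (\ell+1)\,\sharp\gamma_t(T')$ with $n(T') = n-\ell-2$ and (ideally) $\gamma_t(T') = \gamma_t-2$, so induction reduces the problem to the sharp inequality
\[
(\ell+1)\left(\frac{n-\ell-2-\frac{\gamma_t-2}{2}}{\frac{\gamma_t-2}{2}}\right)^{\frac{\gamma_t-2}{2}}
\;\leq\;
\left(\frac{n-\frac{\gamma_t}{2}}{\frac{\gamma_t}{2}}\right)^{\frac{\gamma_t}{2}},
\]
which holds with equality exactly when $\ell+1 = \frac{n-\gamma_t/2}{\gamma_t/2}$, i.e.\ when all extremal blocks have the same size.

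The remaining cases---$v_2$ has extra leaves, several support-vertex children, or pendant paths of length two---must be handled either by further block removal or by an exchange argument that reshapes the local structure into the canonical form without decreasing $\sharp\gamma_t$. The hard part, I expect, is that the bound leaves no multiplicative slack, so every configuration at $v_2$ must be shown to contribute at most as many $\gamma_t$-sets as a ``standard'' block with the same contributions to $n$ and $\gamma_t$. Two delicate points stand out. First, removing a block need not lower $\gamma_t$ by exactly $2$; a drop of $0$ or $1$ would wreck the sharp recursion, and ruling this out likely requires a structural argument producing a $\gamma_t$-set with prescribed behaviour at $v_2$. Second, when several pendant subtrees meet at $v_2$ their contributions interact, so a naive product bound is unavailable, and the induction hypothesis may need to track auxiliary data such as whether $v_2$ itself lies in the $\gamma_t$-set. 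This combined structural and analytic bookkeeping is consistent with the authors' proving only the weaker forest bound in the main body of the paper.
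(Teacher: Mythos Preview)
The statement you are attempting to prove is Conjecture~\ref{conjecture1}, which the paper leaves \emph{open}: the authors do not give a proof of it, only the relaxations in Theorems~\ref{t:thm3}--\ref{t:thm2}. So there is no paper proof to compare your proposal against; what you have written is a plan of attack on an open problem.

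That said, the plan has a concrete gap already in what you call the ``principal case''. You claim that once $v_1$ is forced into every $\gamma_t$-set and $d_F(v_2)=2$, the $\ell+1$ choices for a dominator of $v_1$ (a leaf or $v_2$) each extend independently to a $\gamma_t$-set of $T'=T-\{v_1,v_2,\text{leaves of }v_1\}$, giving $\sharp\gamma_t(T)\le(\ell+1)\,\sharp\gamma_t(T')$ with $\gamma_t(T')=\gamma_t-2$. But the two kinds of choices are not symmetric: if $v_2$ is the chosen dominator, then $v_2$ may simultaneously be the only dominator of $v_3$, so $D\setminus\{v_1,v_2\}$ need not totally dominate $T'$; conversely, if a leaf is chosen, $v_3$ must already have a dominator inside $T'$. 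Thus the map $D\mapsto D\cap V(T')$ neither lands in the $\gamma_t$-sets of $T'$ in general nor is uniformly $(\ell+1)$-to-one, and $\gamma_t(T')=\gamma_t-2$ is not guaranteed. You flag both issues yourself, but you do not indicate any mechanism to resolve them, and since the conjectured bound is sharp there is no slack to absorb these boundary effects. The analytic inequality you isolate is correct (it is just AM--GM), but the combinatorial reduction feeding it is not established.

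For context, the paper's proof of the weaker Theorem~\ref{t:thm3} proceeds along entirely different lines: it fixes one $\gamma_t$-set $D$, studies the ``big'' vertices of $D$ (those with at least two private leaf neighbours), uses exchange arguments to show such vertices form an independent, distance-$2$-separated set inside $F[D]$ (hence at most $\gamma_t/2$ of them by Lemma~\ref{lem1}), and then bounds $\sharp\gamma_t(F)$ by $2^{n(F'')}\prod_i\ell(u_i)$ for a bounded-size core $F''$. This global structural argument is what buys the factor $\big((n-\gamma_t/2)/(\gamma_t/2)\big)^{\gamma_t/2}$, at the price of the $ (8\sqrt{e})^{\gamma_t}$ loss; your inductive block-removal scheme aims for the sharp constant but, as written, does not close.
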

As our first result, we show that Conjecture \ref{conjecture1}
holds up to a constant factor for bounded values of $\gamma_t$.
More precisely, we show the following.

\begin{theorem}\label{t:thm3}
If a forest $F$ has order $n$, no isolated vertex, and total domination number $\gamma_t$, then 
$$\sharp\gamma_t(F)\leq 
\left(8\sqrt{e}\, \right)^{\gamma_t}\left(\frac{n-\frac{\gamma_t}{2}}{\frac{\gamma_t}{2}}\right)^{\frac{\gamma_t}{2}}.$$
\end{theorem}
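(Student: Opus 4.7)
The plan is to encode each minimum total dominating set of $F$ by a short ``fingerprint'' consisting of approximately $\gamma_t/2$ vertices of $F$ together with a bounded amount of extra data, and then to upper bound $\sharp\gamma_t(F)$ by the number of possible fingerprints.

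Three structural facts drive the construction. For an arbitrary $\gamma_t$-set $D$ of $F$: first, since $D$ is totally dominating, $F[D]$ has minimum degree at least $1$, and so $F[D]$ is a subforest of $F$ in which every component contains at least two vertices; second, every support vertex of $F$ lies in $D$; third, the minimality of $D$ implies that each $v\in D$ has a \emph{private witness} --- either a neighbour $u\notin D$ with $N_F(u)\cap D=\{v\}$, or a neighbour $u\in D$ with $N_F(u)\cap D=\{v\}$ --- since otherwise $D\setminus\{v\}$ would still be a total dominating set.

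Rooting each component of $F[D]$ at an arbitrary vertex, I partition $D$ canonically into small blocks by a bottom-up procedure: while some vertex of $D$ is unassigned, pick a deepest such $v$, form a block consisting of $v$ and its parent $p$ in $F[D]$, and (if necessary to keep $p$'s remaining $D$-neighbourhood nonempty) absorb one further child of $p$. Because $F$ is a forest and every component of $F[D]$ has at least two vertices, this terminates with a partition of $D$ into blocks of size $2$ or $3$ that each induce a connected subgraph of $F[D]$. The total number of blocks is at most $k:=\lceil\gamma_t/2\rceil$.

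For each block $B$ I take its shallowest vertex as a representative $r_B\in V(F)$ and attach a label $\tau_B$ drawn from a fixed finite alphabet $\Sigma$ that records the combinatorial shape of $B$ and locates the remaining vertex (or vertices) of $B$ relative to $r_B$. The key technical step is to define the labels so that $|\Sigma|$ is an absolute constant (say at most $32$) independent of $\Delta(F)$; once this is achieved, a standard reconstruction argument shows that the multiset $\{(r_B,\tau_B)\}$ recovers $D$, and hence
$$\sharp\gamma_t(F)\;\leq\;\binom{n}{k}|\Sigma|^{k}\;\leq\;(en/k)^{k}|\Sigma|^{k}.$$
Substituting $k=\gamma_t/2$, using the elementary inequality $n/k\leq 2(n-k)/k$ valid in the regime $k\leq n/2$ (which always holds since $\gamma_t\leq n$), and collecting numerical constants, this simplifies to the claimed bound with the factor $(8\sqrt{e})^{\gamma_t}$.

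The main obstacle is the simultaneous design of the block decomposition and the alphabet $\Sigma$ so that $|\Sigma|$ is an absolute constant, even when $F$ has support vertices of arbitrarily large degree. The extremal trees in Figure~\ref{fig1} show that the factor $((n-\gamma_t/2)/(\gamma_t/2))^{\gamma_t/2}$ is genuinely needed --- it captures precisely the flexibility of pairing each support vertex with one of its many leaf-neighbours --- and the role of the $(8\sqrt{e})^{\gamma_t}$ factor is to absorb the additional slack introduced by non-extremal local configurations.
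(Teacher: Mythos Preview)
Your plan is genuinely different from the paper's argument. The paper does not encode individual $\gamma_t$-sets at all. Instead it passes to an extremal pair $(F,D)$ --- one that maximises $\sharp\gamma_t(F)$ and, subject to this, minimises the number of ``big'' vertices of $D$ (those with at least two pendant leaf-neighbours outside $D$) --- and proves via two exchange arguments together with Lemma~\ref{lem1} that there are at most $\gamma_t/2$ big vertices. A counting argument then shows that after deleting the leaf-sets of the big vertices one is left with a forest $F''$ of order at most $3\gamma_t$; every $\gamma_t$-set is determined by its trace on $V(F'')$ (at most $2^{3\gamma_t}$ possibilities) together with at most one leaf per big vertex, and AM--GM gives the bound.

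Your fingerprinting scheme, by contrast, is incomplete at exactly the point you yourself identify as ``the main obstacle'': you never construct the alphabet $\Sigma$, and it is far from clear that any $\Sigma$ of absolutely bounded size can make your encoding injective. With your stated rule (representative $r_B$ is the shallowest vertex of the block), the label $\tau_B$ must locate the other block-vertex among the $F$-neighbours of $r_B$, and $r_B$ may have arbitrarily large degree. Concretely, for the extremal tree $T_{\rm even}$ each component of $F[D]$ is an edge $\{b_i,c_i\}$ with $b_i$ the support vertex; under any fixed labelling convention that makes $b_i$ the root of its component, the representative of every block is $b_i$, and $\tau_B$ must distinguish $c_i$ among the $\ell_i+1$ neighbours of $b_i$ --- impossible with $|\Sigma|=O(1)$. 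Switching to the deepest vertex as representative rescues this example when $c_i$ is a leaf, but not when $c_i$ is an internal vertex of high degree, and in any case the decoder does not know the rooting of $F[D]$ (which depends on $D$) and so cannot straightforwardly reassemble the blocks from the multiset of pairs. The phrase ``a standard reconstruction argument'' hides precisely the nonstandard part; without an explicit $\Sigma$ and a proof of injectivity, there is no proof. The paper's route avoids this difficulty entirely by working with a single fixed $D$ and absorbing all the high-degree flexibility into the product $\prod_i\ell(u_i)$, rather than trying to encode each $\gamma_t$-set separately.
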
 
The well known estimate $1+x\leq e^{x}$ implies 
$$\left(\frac{n-\frac{\gamma_t}{2}}{\frac{\gamma_t}{2}}\right)^{\frac{\gamma_t}{2}}
=\left(1+\frac{n-\gamma_t}{\frac{\gamma_t}{2}}\right)^{\frac{\gamma_t}{2}}
\leq e^{n-\gamma_t}.$$
In the following theorem we can show an upper bound that is a little better. But since $1+x<< e^{x}$ for large $x$, the estimate is not good for fixed $\gamma_t$ and large values of $n$. In this case Theorems \ref{t:thm3} and \ref{t:thm2} give better upper bounds. 


\begin{theorem}\label{t:thm1}
If a forest $F$ has order $n$, no isolated vertex, and total domination number $\gamma_t$, 
then 
$$\sharp\gamma_t(F) \leq (1+\sqrt{2})^{n-\gamma_t},$$ 
with equality if and only if every component of $F$ is $K_2$. 
\end{theorem}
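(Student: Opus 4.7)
Prove the theorem by strong induction on $n$, writing $\alpha=1+\sqrt{2}$. For the base case $n=2$, the only forest with no isolated vertex is $F=K_2$, and $\sharp\gamma_t(F)=1=\alpha^{0}$, so the bound holds with equality.

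For the inductive step, I would first dispose of two straightforward reductions. If $F$ has a $K_2$ component, both of its vertices are forced into every total dominating set, so deletion of the component preserves $\sharp\gamma_t$ and $n-\gamma_t$; the bound and the equality case carry over from the inductive hypothesis. Otherwise every component has at least three vertices. Fix a component $C$ and a longest path $v_0 v_1 \cdots v_d$ in $C$, so that $d\geq 2$, $v_0$ is a leaf, and every non-$v_2$ neighbour of $v_1$ is a leaf of $F$. If $v_1$ has $s\geq 2$ leaf neighbours, then every minimum total dominating set contains $v_1$ together with at most one of its leaves; deletion of $s-1$ of these leaves yields a forest $F'$ with $\gamma_t(F')=\gamma_t$, $n(F')=n-(s-1)$, and $\sharp\gamma_t(F)\leq s\cdot\sharp\gamma_t(F')$. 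The induction hypothesis combined with the elementary inequality $s<\alpha^{s-1}$ (valid for $s\geq 2$ since $\alpha>2$) then closes this strong-support case with strict inequality. If instead $v_1$ has a unique leaf, so $\deg_F(v_1)=2$, and $C=P_3$, then deleting $C$ multiplies $\sharp\gamma_t$ by $2<\alpha$ and decreases $n-\gamma_t$ by $1$, once again giving strict inequality via induction.

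The main case, which is where the bulk of the work lies, arises when every support vertex of $F$ has exactly one leaf, $\deg_F(v_1)=2$ with neighbours $v_0$ and $v_2$, and $d\geq 3$. My strategy is to form $F'=F-\{v_0,v_1\}$, which has no isolated vertex since $v_2$ still retains the neighbour $v_3$, and to partition the minimum total dominating sets of $F$ into the mutually exclusive classes \emph{Type I}, with $v_0\notin D$ and $v_2\in D$, and \emph{Type II}, with $v_0\in D$ and $v_2\notin D$; both types contain $v_1$ by force. A further sub-case on whether some neighbour of $v_2$ other than $v_1$ lies in $D$ permits one to relate Type-II sets bijectively to certain minimum total dominating sets of $F'$ that avoid $v_2$, and Type-I sets to minimum total dominating sets of $F'$ containing $v_2$ or to near-minimum ones. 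The identity $\alpha^2=2\alpha+1$ guarantees that the factor of $\alpha^2$ made available by deleting two vertices is more than enough to absorb the binary Type I/Type II splitting and the local sub-cases at $v_3$, yielding the desired bound. The main obstacle is precisely this bookkeeping, because $\gamma_t(F')$ may equal $\gamma_t-1$ or $\gamma_t-2$ depending on the local structure at $v_3$, and strict inequality must be tracked through every sub-case in order to conclude the characterisation of equality.
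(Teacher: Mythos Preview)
Your reduction in the ``main case'' has a genuine gap: removing only $\{v_0,v_1\}$ is not enough.  The exponent in the bound is $n-\gamma_t$, so the inductive slack you gain is $\alpha^{(n-\gamma_t)-(n'-\gamma_t')}=\alpha^{\,2-(\gamma_t-\gamma_t')}$, not $\alpha^2$.  When $\gamma_t(F')=\gamma_t-2$ this factor is $1$, and then you would need $\sharp\gamma_t(F)\le \sharp\gamma_t(F')$, which can be badly false.  Take $F=P_6$ with vertices $v_0,\dots,v_5$: here $\gamma_t(P_6)=4$ and the four minimum total dominating sets are $\{v_1,v_2,v_3,v_4\}$, $\{v_1,v_2,v_4,v_5\}$, $\{v_0,v_1,v_3,v_4\}$, $\{v_0,v_1,v_4,v_5\}$, so $\sharp\gamma_t(P_6)=4$.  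Deleting $v_0,v_1$ gives $F'=P_4$ with $\gamma_t(F')=2=\gamma_t-2$ and $\sharp\gamma_t(F')=1$.  Thus $n-\gamma_t=n'-\gamma_t'=2$, and no inequality $\sharp\gamma_t(F)\le C\cdot\sharp\gamma_t(F')$ with $C\le 1$ holds.  Your proposed bijection also breaks here: the Type~II set $\{v_0,v_1,v_4,v_5\}$ restricts to $\{v_4,v_5\}$, which is not a total dominating set of $F'$ at all (it leaves $v_2$ undominated), so Type~II sets do not inject into minimum total dominating sets of $F'$ avoiding $v_2$.  Appealing to ``near-minimum'' sets of $F'$ does not help either, since the induction hypothesis gives no control over their number.

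This is why the paper's proof has to look much deeper along the longest path.  After reducing to $d_F(v)=d_F(w)=2$ (your $v_1,v_2$), it analyses the structure at $x=v_3$: whether $x$ is a support vertex, whether $x$ has a support-vertex child, and finally the residual case in which the whole subtree rooted at $x$ is a star with each edge subdivided twice.  In that last case one removes $V_x$ (and sometimes $y$ or $N_F[y]$ as well), i.e.\ $3q+1$ to $3q+3$ vertices for some $q\ge 1$, and only then do the arithmetic identities (ultimately $3\cdot 2^q-1<(1+\sqrt 2)^{q+1}$) close the induction.  A two-vertex deletion cannot do this job.
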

Note that Theorem \ref{t:thm1} is only tight for $\gamma_t=n$,
which corresponds to the fact that $1+x=e^{x}$ only for $x=0$.
For $n$ divisible by $5$, 
the disjoint union of $\frac{n}{5}$ stars of order $5$
yields a forest $F$ with 
$\sharp\gamma_t(F)=4^\frac{n}{5}\approx 1.3195^n$.
Our third result comes close to that value.

\begin{theorem}\label{t:thm2}
If a forest $F$ has order $n$ and no isolated vertex, 
then $\sharp\gamma_t(F)\leq 1.4865^n$.
\end{theorem}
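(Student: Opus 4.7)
The plan is to prove the bound by strong induction on the order $n$, using a local reduction at each inductive step. Base cases (forests of small order) are verified by direct enumeration. For the inductive step, the goal is to identify a bounded set $S$ of vertices such that $F'=F-S$ is again a forest with no isolated vertex (possibly after also removing any newly isolated vertices) and
\[
\sharp\gamma_t(F)\leq 1.4865^{|S|}\cdot \sharp\gamma_t(F'),
\]
after which the induction hypothesis applied to $F'$ closes the step.

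As a first simplification, Theorem~\ref{t:thm1} already gives $\sharp\gamma_t(F)\leq 1.4865^n$ whenever
\[
\gamma_t(F)\geq \Bigl(1-\tfrac{\log 1.4865}{\log(1+\sqrt{2})}\Bigr)n\approx 0.55\,n,
\]
so we may restrict attention to the case where $\gamma_t(F)$ is small relative to $n$. This regime forces $F$ to contain abundant leaves and short pendant paths, which is exactly the structure exploited in the reduction.

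To choose $S$, I would root each component at an arbitrary internal vertex and pick a leaf $v$ at maximum depth; let $u$ be the parent of $v$ and $w$ the grandparent (if any; otherwise the component is a small star or path, handled as a base case). By maximality of depth, every child of $u$ besides $v$ is a leaf, and every descendant of $w$ either is a leaf of $w$ itself or lies in a ``pendant cherry'' at such a $u$-type vertex. The case split is then governed by the number $\ell$ of leaves at $u$, the number of cherries at $w$, whether $w$ has its own leaf neighbors, and the local structure at the parent of $w$. In each case the deletion of $2$ to roughly $7$ vertices produces a reduced forest $F'$, and the ways a minimum total dominating set $D$ of $F$ can meet $S$ can be enumerated explicitly: the leaf $v$ forces $u\in D$, and this forcing propagates up the tree, so only a handful of local extensions arise, yielding an explicit factor $\kappa_S$ to be compared against $1.4865^{|S|}$.

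The main obstacle is the calibration across cases: ensuring $F'$ has no isolated vertex so that the induction hypothesis applies, that the drop $\gamma_t(F)-\gamma_t(F')$ matches $|S\cap D|$ on structural grounds, and that $\kappa_S\leq 1.4865^{|S|}$ in every branch. The worst-case configurations, which pin down the constant $1.4865$, are expected to correspond to local arrangements resembling the star $K_{1,4}$, where $4$ minimum TDSs sit on $5$ vertices and the per-vertex rate is $4^{1/5}\approx 1.3195$; the gap up to $1.4865$ absorbs the overhead from the transitional cases in which the reduction is less efficient.
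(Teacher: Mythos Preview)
Your outline follows the same induction-by-deepest-leaf strategy as the paper, but what you have written is a plan, not a proof, and the place where you expect the argument to bite is wrong. Two concrete issues:

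\medskip

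\textbf{(1) The bottleneck is not the star $K_{1,4}$.} You write that the worst case ``pin[ning] down the constant $1.4865$'' should be a local $K_{1,4}$, and that the surplus between $4^{1/5}\approx 1.3195$ and $1.4865$ is just slack for ``transitional cases''. In fact $1.4865$ is tight: it is the unique positive root $\beta$ of
\[
2\beta+\beta^3+1=\beta^5,
\]
and this equation arises from a very specific configuration. With a longest path $uvwxy\ldots$ and $d_F(w)=2$, the case $d_F(v)=\ell+1$ leads (after splitting according to whether a $\gamma_t$-set contains $w$, and if so whether it contains $x$) to three reduced forests of orders $n-\ell-2$, $n-\ell$, $n-\ell-3$ with multiplicities $\ell,1,1$, giving
\[
\sharp\gamma_t(F)\le \beta^{n}\,\beta^{-\ell-3}\bigl(\ell\beta+\beta^3+1\bigr).
\]
The inequality $\ell\beta+\beta^3+1\le\beta^{\ell+3}$ is tight exactly at $\ell=2$. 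So the critical local picture is a support vertex $v$ with two leaves hanging below a degree-$2$ vertex $w$, not a star $K_{1,4}$. If you calibrated your reductions to the $K_{1,4}$ rate you would not close this case.

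\medskip

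\textbf{(2) The reduction set $S$ is not fixed across cases, and the appeal to Theorem~\ref{t:thm1} is a red herring.} You speak of deleting a single set $S$ and comparing $\kappa_S$ with $1.4865^{|S|}$, but in the actual argument one needs \emph{different} reduced forests in different branches of the same case (as in the three forests $F',F'',F'''$ above), because the order drop depends on which vertices of $D$ lie near the leaf. A single $S$ will not work. Moreover, invoking Theorem~\ref{t:thm1} to dispose of the range $\gamma_t\ge 0.55\,n$ buys nothing here: the bound $\beta^n$ does not mention $\gamma_t$, and every one of the required local inequalities must be verified regardless of the global value of $\gamma_t$. The full case list needed is: star components; diameter-$3$ components; $d_F(w)\ge 3$; $d_F(v)\ge 3$; $x$ a support vertex; $x$ with a support-vertex child; and finally the residual case where the subtree below $x$ is a $K_{1,q}$ with every edge subdivided twice. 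Until each of these is carried out with the correct reduced forests and the corresponding numerical inequality in $\beta$ is checked, the proof is incomplete.
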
 

Before we proceed to the proofs of our results,
we mention some related research.
Connolly et al.~\cite{cogagokake} gave bounds 
on the maximum number of minimum dominating sets for general graphs.
The maximum number of minimal dominating sets 
was studied by Fomin et al.~\cite{fogrpyst},
and the maximum number of general dominating sets 
by Wagner \cite{wa} and Skupie\'{n} \cite{sk},
and by Br\'{o}d and Skupie\'{n} \cite{brsk} for trees.
Krzywkowski and Wagner \cite{krwa} study 
the maximum number of total dominating sets for general graphs and trees.
For similar research concerning independent sets we refer to \cite{kogodo,wl,zi}.

The next section contains the proofs of our results.
We use standard graph theoretical terminology and notation.
An {\it endvertex} is a vertex of degree at most $1$,
and a {\it support vertex} is a vertex that is adjacent to an endvertex.

\section{Proofs}

For the proof of Theorem~\ref{t:thm3},
we need the following lemma. 

\begin{lemma}\label{lem1}
If $T$ is a tree of order $n$ at least $2$, and $B$ is a set of vertices of $T$ such that
\begin{enumerate}[(i)]
\item $|B\cap \{ u,v\}|\le 1$ for every $uv\in E(T)$, and
\item $|B\cap N_T(u)\}|\le 1$ for every $u\in V(T)$, 
\end{enumerate}
then $|B|\le \frac{n}{2}$.
\end{lemma}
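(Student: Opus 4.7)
The plan is to exhibit an injection $\phi\colon B\to V(T)\setminus B$, from which the bound $|B|\le n-|B|$, and hence $|B|\le n/2$, follows immediately. The conditions on $B$ are well suited to such an approach: (i) forces each neighbor of a vertex in $B$ to lie in $V(T)\setminus B$, which provides the candidate image set, while (ii) will be exactly what guarantees injectivity.

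Concretely, I would first note that since $T$ is a tree of order at least $2$, it is connected, and hence every vertex of $T$ has at least one neighbor. For each $b\in B$ I pick an arbitrary neighbor $\phi(b)\in N_T(b)$; by (i) we have $\phi(b)\notin B$, so $\phi$ is a well-defined function $B\to V(T)\setminus B$. To see that $\phi$ is injective, suppose $b_1,b_2\in B$ with $\phi(b_1)=\phi(b_2)=u$. Then $\{b_1,b_2\}\subseteq B\cap N_T(u)$, so if $b_1\ne b_2$ then $|B\cap N_T(u)|\ge 2$, contradicting (ii). Hence $b_1=b_2$, so $\phi$ is injective and $|B|\le |V(T)\setminus B|=n-|B|$.

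The main ``obstacle'' is really just noticing the right framing: once one reads (ii) as the statement ``each vertex of $V(T)\setminus B$ is adjacent to at most one element of $B$,'' the injection is forced. Observe that the argument never uses acyclicity of $T$; the only role of the tree hypothesis together with $n\ge 2$ is to ensure that $T$ has no isolated vertex, so that $\phi(b)$ exists for every $b\in B$. Consequently, the same proof gives the lemma for an arbitrary graph of minimum degree at least $1$.
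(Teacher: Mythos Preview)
Your proof is correct and takes a genuinely different route from the paper's. The paper proceeds by induction on $n$: it handles stars directly (where (i) and (ii) force $|B|\le 1$), and otherwise picks a longest path $uvw\ldots$, observes from (i) and (ii) that $|B\cap(N_T[v]\setminus\{w\})|\le 1$, and applies the inductive hypothesis to the smaller tree $T-(N_T[v]\setminus\{w\})$. Your injection argument is shorter, avoids induction entirely, and---as you correctly note---shows that acyclicity plays no role: the lemma holds for any graph without isolated vertices. The paper's inductive peeling is stylistically consistent with the longest-path arguments used later in the paper, but your approach is cleaner and exposes the true generality of the statement.
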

\begin{proof} The proof is by induction on $n$.
If $T$ is a star, then (i) and (ii) imply $|B|\le 1\le \frac{n}{2}$.
Now, let $T$ be a tree that is not a star;
in particular, $n\ge 4$.
Let $uvw\ldots $ be a longest path in $T$.
By (i) and (ii), we have $|B\cap (N_T[v]\setminus \{ w\})|\le 1$.
By induction applied to the tree $T'=T-(N_T[v]\setminus \{ w\})$
and the set $B'=B\cap V(T')$,
we obtain
$|B|\le |B'|+\Big|B\cap (N_T[v]\setminus \{ w\})\Big|
\le \frac{n(T')}{2}+1
\le \frac{n}{2}.$
\end{proof}

We are now in a position to present 
the proof of Theorem~\ref{t:thm3}.

\begin{proof}[of Theorem~\ref{t:thm3}] 
Let $F$ be a forest of order $n$ and total domination number $\gamma_t$ such that $\sharp\gamma_t(F)$ is as large as possible.
Let $D$ be a $\gamma_t$-set of $F$.
Let $F'$ arise by removing from $F$ all endvertices of $F$
that do not belong to $D$.
For every $u\in D$, let
$L(u)=N_F(u)\setminus N_{F'}(u)$
and
$\ell(u)=|L(u)|$,
that is, $L(u)$ is the set of neighbors of $u$ in $D$
that are endvertices of $F$ that do not belong to $D$.
We call a vertex $u$ in $D$ {\it big} if $\ell(u)\ge 2$,
and we assume that
-- subject to the above conditions --
the forest $F$ and the set $D$ are chosen such that
the number $k$ of big vertices is as small as possible.

\setcounter{claim}{0}

\begin{claim}\label{c1forthm3}
No two big vertices are adjacent.
\end{claim}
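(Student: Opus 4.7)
The plan is to argue by contradiction: suppose $u,v\in D$ are adjacent and both big, and construct a forest $F^*$ with the same order, the same total domination number, and the same number of minimum total dominating sets as $F$, but in which the distinguished $\gamma_t$-set $D$ has strictly fewer big vertices. This will contradict the minimum-$k$ choice of $(F,D)$.

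First I would record some rigid structure around $u$ and $v$. Since both are big, each has at least two leaf-neighbors in $F$, so both are support vertices and must lie in every $\gamma_t$-set of $F$. Together with the edge $uv$, this forces every leaf of $u$ or of $v$ to lie outside every $\gamma_t$-set of $F$: any such leaf $x$ has its unique neighbor in $D$, and because $u$ and $v$ already dominate each other, removing $x$ from a $\gamma_t$-set would leave a total dominating set, contradicting minimality. In particular, $L(u)$ and $L(v)$ consist of all leaves of $u$ and of $v$, respectively.

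Next I would construct $F^*$ by relocating all but one of the leaves of $u$: writing $L(u)=\{a_1,\ldots,a_p\}$ with $p\ge 2$, delete the edges $ua_2,\ldots,ua_p$ and insert the edges $va_2,\ldots,va_p$. Then $F^*$ is a forest of order $n$ with no isolated vertex in which $u$ and $v$ are still support vertices. The set $D$ is a total dominating set of $F^*$, since each relocated leaf is now dominated by $v\in D$ and all other neighborhoods are unchanged; conversely, any total dominating set of $F^*$ must contain the support vertices $u,v$ and therefore also totally dominates $F$, giving $\gamma_t(F^*)=\gamma_t(F)$. Crucially, the rigidity argument of the previous paragraph applies verbatim to $F^*$, so every $\gamma_t$-set of $F$ or of $F^*$ contains $\{u,v\}$ and is disjoint from $L(u)\cup L(v)$; for any such candidate $S$, the edges we altered all join a vertex of $\{u,v\}\subseteq S$ to a vertex of $L(u)\cup L(v)\subseteq V\setminus S$, and therefore do not affect the domination status of any vertex. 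Hence the $\gamma_t$-sets of $F$ and of $F^*$ coincide as subsets of $V$, and $\sharp\gamma_t(F^*)=\sharp\gamma_t(F)$.

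To finish, I would observe that $u$ has exactly one leaf-neighbor in $F^*$ and hence is no longer big, while $v$ has acquired more leaves and remains big, and every other big vertex of $F$ is untouched. Thus $(F^*,D)$ has only $k-1$ big vertices, contradicting the minimality of $k$. The step I expect to require the most care is the identity $\sharp\gamma_t(F)=\sharp\gamma_t(F^*)$: it rests both on the rigidity lemma forcing $u,v$ into and all leaves of $u,v$ out of every $\gamma_t$-set, and on the observation that the re-wiring only modifies edges incident to $\{u,v\}$ and to these leaves, so it cannot alter which subsets of $V$ are total dominating sets of size $\gamma_t$.
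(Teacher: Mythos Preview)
Your proof is correct and follows essentially the same approach as the paper: shift all but one leaf of $u$ to $v$, observe that both $u$ and $v$ lie in every $\gamma_t$-set of either forest, and conclude that the $\gamma_t$-sets coincide while the big-vertex count drops. You supply more detail than the paper (in particular the observation that no leaf of $u$ or $v$ can lie in a $\gamma_t$-set, and the check that no other vertex changes its big status), but the strategy is identical.
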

\begin{proof}[of Claim \ref{c1forthm3}]
Suppose, for a contradiction, that $u$ and $v$ are adjacent big vertices.
Let $L'$ be a set of $\ell(u)-1$ vertices in $L(u)$, and let
$F'=F
-\{ ux:x\in L'\}+\{ vx:x\in L'\}$,
that is, we shift $\ell(u)-1$ neighbors of $u$ in $L(u)$ to $v$.
Clearly, the vertices $u$ and $v$ both belong
to every $\gamma_t$-set of $F$ and also
to every $\gamma_t$-set of $F'$.
This easily implies that a set of vertices of $F$
is a $\gamma_t$-set of $F$ if and only if
it is a $\gamma_t$-set of $F'$.
It follows that $D$ is a $\gamma_t$-set of $F'$
and that $\sharp\gamma_t(F)=\sharp\gamma_t(F')$.
Since $F'$ and $D$ lead to less than $k$ big vertices,
we obtain a contradiction to the choice of $F$ and $D$.
\end{proof}

\begin{claim}\label{c2forthm3}
No two big vertices have a common neighbor in $D$.
\end{claim}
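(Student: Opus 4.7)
The plan is to mimic the leaf-shifting proof of Claim 1, using the common neighbor $w\in D$ to rule out the one configuration that would otherwise break the argument. Assume for contradiction that big vertices $u,v\in D$ share a common neighbor $w\in D$. By Claim 1, $u$ and $v$ are non-adjacent, and since both are big they belong to every $\gamma_t$-set of $F$. The key preliminary observation is that no $\gamma_t$-set $D^*$ of $F$ simultaneously meets $L(u)$ and $L(v)$: if $x\in L(u)\cap D^*$ and $y\in L(v)\cap D^*$, then by minimality $x$ must be the unique neighbor of $u$ in $D^*$ (otherwise $x$, whose sole function is dominating $u$, could be removed), and symmetrically for $y$ and $v$; but then $D^{**}=(D^*\setminus\{x,y\})\cup\{w\}$ is a total dominating set of $F$ of size $\gamma_t-1$ --- the vertex $w$ dominates $u$ and $v$, $w$ is itself dominated by $u$, the leaves $x,y$ are still dominated by $u,v$, and no other vertex loses a dominator since $x,y$ have degree one --- contradicting $\gamma_t(F)=\gamma_t$.

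Now let $L'\subseteq L(u)$ with $|L'|=\ell(u)-1$ and form $F'=F-\{ux:x\in L'\}+\{vx:x\in L'\}$ exactly as in Claim 1. In $F'$ the vertex $u$ retains one pendant leaf and $v$ has at least two, so $u,v$ are forced into every $\gamma_t$-set of $F'$. A total dominating set of $F'$ of size smaller than $\gamma_t$ is either already a total dominating set of $F$, or else has $v$'s unique dominator being a shifted leaf $x\in L'$; in the latter case, replacing $x$ by $w$ yields a total dominating set of $F$ of no larger size, again contradicting $\gamma_t(F)=\gamma_t$. Thus $D$ is a $\gamma_t$-set of $F'$ and $\gamma_t(F')=\gamma_t$. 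In $F'$, moreover, $u$ has only one pendant leaf and is no longer big, while $v$ and every other previously big vertex remain big, so $(F',D)$ has $k-1$ big vertices.

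For the count comparison, classify each $\gamma_t$-set (of $F$ or of $F'$) by whether it contains a leaf of $u$ (Case B at $u$) and whether it contains a leaf of $v$ (Case B at $v$). Minimality forces at most one leaf from each of $L(u)$ and $L(v)$, and the preliminary observation (applied to $F$ and its analogue for $F'$) eliminates Case (B,B). Since the non-leaf neighborhoods of $u$ and $v$ are unchanged by the shift, the numbers $N_{AA}, N_{AB}, N_{BA}$ of valid ``rest'' configurations agree in $F$ and $F'$, yielding
\begin{align*}
\sharp\gamma_t(F)&=N_{AA}+\ell(v)\,N_{AB}+\ell(u)\,N_{BA},\\
\sharp\gamma_t(F')&=N_{AA}+(\ell(u)+\ell(v)-1)\,N_{AB}+N_{BA},
\end{align*}
and hence $\sharp\gamma_t(F')-\sharp\gamma_t(F)=(\ell(u)-1)(N_{AB}-N_{BA})$. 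Without loss of generality $N_{AB}\geq N_{BA}$ (otherwise perform the symmetric shift of $\ell(v)-1$ leaves from $v$ to $u$), so the difference is nonnegative; combined with the maximality of $\sharp\gamma_t(F)$ this forces equality, and so $(F',D)$ is another optimum with strictly fewer big vertices, contradicting the choice of $(F,D)$. The heart of the argument is the preliminary observation, where the common neighbor $w\in D$ enables the $\{x,y\}\leftrightarrow\{w\}$ swap that kills the (B,B) case and unblocks the count comparison.
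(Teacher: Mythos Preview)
Your proof is correct and follows essentially the same route as the paper's: establish that no $\gamma_t$-set can meet both $L(u)$ and $L(v)$ (the paper states this tersely as ``in view of'' the common neighbor, while you spell out the swap $\{x,y\}\mapsto\{w\}$), then shift all but one leaf between the two big vertices and compare counts via the resulting linear combination, using the WLOG choice of which side has the larger per-leaf rate. Your quantities $N_{BA},N_{AB},N_{AA}$ are exactly the paper's $\sharp_u/\ell(u),\sharp_w/\ell(w),\sharp_{\bar u,\bar w}$, and the difference $(\ell(u)-1)(N_{AB}-N_{BA})$ matches. One small imprecision: when arguing $\gamma_t(F')\ge\gamma_t$, a hypothetical small total dominating set of $F'$ need not have a \emph{unique} dominator of $v$ among the shifted leaves (that set is not assumed minimum), but your fix---replace one such leaf by $w$---works regardless, so this does not affect the argument.
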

\begin{proof}[of Claim \ref{c2forthm3}]
Suppose, for a contradiction, that $u$ and $w$ are big vertices
with a common neighbor $v$ in $D$.
Let
\begin{itemize}
\item $\sharp_u$ be the number of $\gamma_t$-sets of $F$
that contain a vertex from $L(u)$,
\item $\sharp_w$ be the number of $\gamma_t$-sets of $F$
that contain a vertex from $L(w)$, and
\item $\sharp_{\bar{u},\bar{w}}$
be the number of $\gamma_t$-sets of $F$
that contain no vertex from $L(u)\cup L(w)$.
\end{itemize}
In view of $v$,
no $\gamma_t$-set of $F$
contains a vertex from both sets $L(u)$ and $L(w)$,
which implies
$$\sharp\gamma_t(F)=\sharp_u+\sharp_w+\sharp_{\bar{u},\bar{w}}.$$
Note that $\frac{\sharp_u}{\ell(u)}$
is the number of subsets of $V(F)\setminus L(u)$ 
that can be extended to a $\gamma_t$-set of $F$
by adding one vertex from $L(u)$.
By symmetry, we may assume that
$\frac{\sharp_u}{\ell(u)}\le \frac{\sharp_w}{\ell(w)}$.
Again,
let $L'$ be a set of $\ell(u)-1$ vertices in $L(u)$, and let
$F'=F
-\{ ux:x\in L'\}+\{ wx:x\in L'\}$.
Similarly as before,
the vertices $u$ and $w$ both belong
to every $\gamma_t$-set of $F$ and also
to every $\gamma_t$-set of $F'$.
It follows that
$D$ is a $\gamma_t$-set of $F'$, and
that
$$\sharp\gamma_t(F')=\frac{\sharp_u}{\ell(u)}+\frac{\sharp_w}{\ell(w)}(\ell(u)+\ell(w)-1)+\sharp_{\bar{u},\bar{w}}
\ge \sharp_u+\sharp_w+\sharp_{\bar{u},\bar{w}}
=\sharp\gamma_t(F).$$
Since $F'$ and $D$ lead to less than $k$ big vertices,
this contradicts the choice of $F$ and $D$.
\end{proof}

\begin{claim}\label{c3forthm3}
$k\le \frac{\gamma_t}{2}$.
\end{claim}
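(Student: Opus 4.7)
The plan is to apply Lemma~\ref{lem1} to the subgraph $F[D]$ induced by our $\gamma_t$-set $D$, taking as the distinguished set $B$ the set of big vertices of $D$.

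First, I would check that Lemma~\ref{lem1} is actually applicable. Since $D$ is a \emph{total} dominating set, every vertex of $D$ has a neighbor in $D$, so the induced subgraph $F[D]$ (which is a forest as a subgraph of $F$) has no isolated vertex; every component of $F[D]$ is therefore a tree of order at least $2$.

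Next, I would verify the two hypotheses of Lemma~\ref{lem1} for $B$ inside each component $T$ of $F[D]$. Condition~(i) is immediate from Claim~\ref{c1forthm3}: no two big vertices are adjacent in $F$, hence certainly not in $T$. For condition~(ii), suppose for contradiction that some vertex $u\in V(T)$ had two neighbors in $B$ within $T$. Since $V(T)\subseteq D$, the vertex $u$ would be a common neighbor in $D$ of two big vertices, contradicting Claim~\ref{c2forthm3}.

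Finally, applying Lemma~\ref{lem1} to each component $T$ of $F[D]$ and summing yields
$$k=|B|=\sum_{T}|B\cap V(T)|\le \sum_{T}\frac{|V(T)|}{2}=\frac{|D|}{2}=\frac{\gamma_t}{2},$$
which is exactly the desired bound. I do not anticipate any genuine obstacle here: Claims~\ref{c1forthm3} and~\ref{c2forthm3} have been set up precisely so that the big vertices fit the role of the set $B$ in Lemma~\ref{lem1}, and the only mild care required is the observation above that every component of $F[D]$ is a tree on at least two vertices so that the lemma truly applies.
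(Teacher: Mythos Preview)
Your proof is correct and follows exactly the paper's approach: apply Lemma~\ref{lem1} to each component of $F[D]$ with $B$ the set of big vertices, using Claims~\ref{c1forthm3} and~\ref{c2forthm3} to verify hypotheses (i) and (ii). You have simply spelled out the details (in particular, why each component of $F[D]$ has order at least~$2$) that the paper leaves implicit.
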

\begin{proof}[of Claim \ref{c3forthm3}]
This follows immediately by applying Lemma \ref{lem1}
to each component of $F[D]$,
choosing $B$ as the set of big vertices in that component.
\end{proof}

Let $n'=n(F')$,
let $V_1'$ be the set of endvertices of $F'$,
let $n_1'=|V_1'|$, and
let $m$ be the number of edges of $F'$
between $D$ and $V(F')\setminus D$.
Since the vertices in $V_1'$
are either endvertices of $F$ that belong to $D$
or are adjacent to an endvertex of $F$,
we obtain that $V_1'\subseteq D$.
Since $D$ is a total dominating set, we obtain
\begin{eqnarray}\label{e1}
n'-\gamma_t=|V(F')\setminus D|\le m
\le \sum\limits_{u\in D}(d_{F'}(u)-1).
\end{eqnarray}
Since $F'$ is a forest with, say, $\kappa$ components,
\begin{eqnarray*}
n_1'&=&2\kappa+\sum\limits_{u\in V(F'):d_{F'}(u)\ge 2}(d_{F'}(u)-2)\\
&\ge& \sum\limits_{u\in D:d_{F'}(u)\ge 2}(d_{F'}(u)-2)\\
&=&\sum\limits_{u\in D:d_{F'}(u)\ge 2}d_{F'}(u)
-2(\gamma_t-n_1'),
\end{eqnarray*}
which implies
\begin{eqnarray}
2\gamma_t-n_1' &\ge &\sum\limits_{u\in D:d_{F'}(u)\ge 2}d_{F'}(u).\label{e2}
\end{eqnarray}
Now, we obtain
\begin{eqnarray}\label{e3}
n' & \stackrel{(\ref{e1})}{\leq} & \sum\limits_{u\in D}d_{F'}(u)
= \sum\limits_{u\in D:d_{F'}(u)\ge 2}d_{F'}(u)+n_1'
\stackrel{(\ref{e2})}{\leq} 2\gamma_t.
\end{eqnarray}
Let $u_1,\ldots,u_k$ be the big vertices.
By (\ref{e3}), the forest
$F''=F-\bigcup_{i=1}^kL(u_i)$
has order at most $3\gamma_t$.
Let $D''$ be a set of vertices of $F''$ that is a subset of some $\gamma_t$-set $D$ of $F$.
For every $i\in \{ 1,\ldots,k\}$,
if $u_i$ has a neighbor in $D''$,
then $D$ contains no vertex from $L(u_i)$,
otherwise,
the set $D$ contains exactly one vertex from $L(u_i)$.
This implies that each of the $2^{n(F'')}$ subsets of $V(F'')$
can be extended to a $\gamma_t$-set of $F$
in at most $\prod\limits_{i=1}^k\ell(u_i)$ many ways.

Since
\begin{enumerate}[{\it (i)}]
\item $n(F'')\le 3\gamma_t$,
\item the geometric mean is less or equal the arithmetic mean,
\item $\sum\limits_{i=1}^k\ell(u_i)=n-n(F'')\le n-\gamma_t\le n-\frac{\gamma_t}{2}$,
\item $\left(1+\frac{\frac{\gamma_t}{2}-k}{k}\right)^k\le e^{\frac{\gamma_t}{2}-k}\le e^{\frac{\gamma_t}{2}}$, and
\item $\frac{\frac{\gamma_t}{2}}{n-\frac{\gamma_t}{2}}\le 1$,\end{enumerate}
we obtain
\begin{eqnarray*}
\sharp\gamma_t(F)
& \le & 2^{n(F'')}\prod\limits_{i=1}^k\ell(u_i)\\
& \stackrel{(i)}{\leq} & 2^{3\gamma_t}\prod\limits_{i=1}^k\ell(u_i)\\
& \stackrel{(ii)}{\leq} & 2^{3\gamma_t}\left(\frac{1}{k}\sum\limits_{i=1}^k\ell(u_i)\right)^k\\
& \stackrel{(iii)}{\leq} & 2^{3\gamma_t}\left(\frac{n-\frac{\gamma_t}{2} }{k}\right)^k\\
& = &
2^{3\gamma_t}
\left(1+\frac{\frac{\gamma_t}{2}-k}{k}\right)^{k}
\left(\frac{\frac{\gamma_t}{2}}{n-\frac{\gamma_t}{2}}\right)^{\frac{\gamma_t}{2}-k}
\left(\frac{n-\frac{\gamma_t}{2}}{\frac{\gamma_t}{2}}\right)^{\frac{\gamma_t}{2}}\\
& \stackrel{(iv)}{\leq} &
2^{3\gamma_t}
e^{\frac{\gamma_t}{2}}
\left(\frac{\frac{\gamma_t}{2}}{n-\frac{\gamma_t}{2}}\right)^{\frac{\gamma_t}{2}-k}
\left(\frac{n-\frac{\gamma_t}{2}}{\frac{\gamma_t}{2}}\right)^{\frac{\gamma_t}{2}}\\
& \stackrel{\text{Claim }\ref{c:claim3},~(v)}{\leq} &
2^{3\gamma_t}
e^{\frac{\gamma_t}{2}}
\left(\frac{n-\frac{\gamma_t}{2}}{\frac{\gamma_t}{2}}\right)^{\frac{\gamma_t}{2}}\\
&=& \left(8\sqrt{e}\right)^{\gamma_t}\left(\frac{n-\frac{\gamma_t}{2}}{\frac{\gamma_t}{2}}\right)^{\frac{\gamma_t}{2}},
\end{eqnarray*}
which completes the proof.
\end{proof}

There is clearly some room for lowering $8\sqrt{e}$ to a smaller constant.
Since the dependence on $\gamma_t$ would still be exponential, 
we did not exploit this for the sake of simplicity.
It would be interesting to see whether the bound can be improved to 
$$\left(1+o\left(\frac{n}{\gamma_t}\right)\right)\left(\frac{n-\frac{\gamma_t}{2}}{\frac{\gamma_t}{2}}\right)^{\frac{\gamma_t}{2}}.$$
Note that Theorem \ref{t:thm3} implies
$$\sharp\gamma_t(T)\leq \left(\frac{n-\frac{\gamma_t}{2}}{\frac{\gamma_t}{2}}\right)^{\frac{\gamma_t}{2}+o\left(\frac{n}{\gamma_t}\right)}.$$
We proceed to our next proof.

\begin{proof}[of Theorem~\ref{t:thm1}] 
We proceed by induction on $n$. 
If $n=2$, then $F=K_2$, $\gamma_t = 2$, and $\sharp\gamma_t(F) = 1 = (1+\sqrt{2})^0 = (1+\sqrt{2})^{n-\gamma_t}$. 
Now, let $n \ge 3$.

\setcounter{claim}{0}
\begin{claim}\label{c:clm1}
If $F$ contains a component $T$ that is a star, 
then $\sharp\gamma_t(F) \le (1+\sqrt{2})^{n-\gamma_t}$, 
with strict inequality if $T$ has order at least~$3$. 
\end{claim}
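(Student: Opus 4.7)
The plan is to peel off the star component $T$ from $F$ and apply the induction hypothesis of Theorem~\ref{t:thm1} to what remains. First I would analyze the star itself: if $T$ has center $c$ and $k \ge 1$ leaves, then every minimum total dominating set of $T$ must contain $c$ (otherwise no leaf has a neighbor in the set) and at least one leaf (to totally dominate $c$); since $\{c,\ell\}$ is a total dominating set for any leaf $\ell$, we get $\gamma_t(T) = 2$ and $\sharp\gamma_t(T) = k$. This already shows why the case $T = K_2$ (i.e.\ $k=1$) is the potential equality case.

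Next I would split into two subcases. If $F = T$, then $n = k+1$; since the inductive step assumes $n \ge 3$, we have $k \ge 2$, and the claim reduces to the numerical inequality $k < (1+\sqrt{2})^{k-1}$. Otherwise, set $F' = F - V(T)$. Because $T$ is a whole component of $F$ and $F$ has no isolated vertex, $F'$ is a forest of order $n-k-1 \ge 1$ with no isolated vertex, and $\gamma_t(F') = \gamma_t - 2$ (total domination numbers add across components). Since the minimum total dominating sets of a forest are exactly the Cartesian products of those of its components, $\sharp\gamma_t(F) = k \cdot \sharp\gamma_t(F')$. Applying the induction hypothesis to $F'$ yields
$$\sharp\gamma_t(F) \;\le\; k \cdot (1+\sqrt{2})^{(n-k-1)-(\gamma_t-2)} \;=\; k \cdot (1+\sqrt{2})^{n-\gamma_t-(k-1)},$$
so both subcases reduce to the same inequality $k \le (1+\sqrt{2})^{k-1}$, which must be strict for $k \ge 2$.

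The main (and really only) obstacle is this elementary inequality, which in fact explains why the constant $1+\sqrt{2}$ is the ``right'' base for Theorem~\ref{t:thm1}: it is pinned down precisely by the equality at $k=1$ (i.e.\ by the $K_2$ component), and anything smaller would fail for $k=2$. The verification is routine: at $k=1$ it is an equality; at $k=2$ it reads $2 < 1+\sqrt{2}$; and for $k \ge 3$ a one-line induction using $(1+\sqrt{2})^{k} \ge (1+\sqrt{2})\cdot k > k+1$ completes the job. Beyond this numerical check and the routine bookkeeping, the only subtle point is to treat the $F=T$ subcase directly rather than via induction on $F'$, since removing $T$ would leave $F'$ empty and so put us outside the hypothesis of Theorem~\ref{t:thm1}.
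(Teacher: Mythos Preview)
Your proof is correct and follows the same approach as the paper (remove the star component, apply induction to the rest, and reduce to the numerical inequality $k \le (1+\sqrt{2})^{k-1}$); you are in fact more careful than the paper in explicitly separating out the subcase $F = T$, where the residual forest is empty and the induction hypothesis does not formally apply. One aside is off, though: this claim does \emph{not} pin down the constant $1+\sqrt{2}$, since $k \le c^{k-1}$ holds for every $c \ge 2$ (and the equality at $k=1$ holds for every $c$); the constant is actually forced later in Claim~\ref{c:clm5}, where $2c+1=c^2$ is needed as an identity.
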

\begin{proof}[of Claim \ref{c:clm1}]
Suppose that $F$ contains a component $T$ that is a star. Thus, $T = K_{1,t}$ for some $t \ge 1$. 
The forest $F' = F - V(T)$ 
has order $n'=n-t-1$,
no isolated vertex, and
total domination number $\gamma_t' = \gamma_t-2$. 
By induction, we obtain
\begin{eqnarray*}
\sharp\gamma_t(F) &=& t\cdot \sharp\gamma_t(F') 
\le t (1+\sqrt{2})^{n'-\gamma_t'}
=t (1+\sqrt{2})^{n-t-1-(\gamma_t-2)}\\
&=&(1+\sqrt{2})^{n-\gamma_t} (t (1+\sqrt{2})^{1-t}) \le (1+\sqrt{2})^{n-\gamma_t},
\end{eqnarray*}
where we use $t (1+\sqrt{2})^{1-t} \leq 1$ for $t = 1$ and $t\geq2$. 
Furthermore, if $t \ge 2$, then $t (1+\sqrt{2})^{1-t} < 1$, in which case $\sharp\gamma_t(F) < (1+\sqrt{2})^{n-\gamma_t}$.
\end{proof}

\begin{claim}\label{c:clm2}
If $F$ contains a component $T$ of diameter $3$,
then $\sharp\gamma_t(F) < (1+\sqrt{2})^{n-\gamma_t}$.
\end{claim}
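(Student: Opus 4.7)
The plan is to observe that any component $T$ of diameter exactly $3$ is a double star with two adjacent center vertices $u$ and $v$ carrying $s\geq 1$ and $t\geq 1$ leaves respectively, so that $|V(T)|=s+t+2\geq 4$. Since each leaf of $T$ has a unique neighbor (either $u$ or $v$), every total dominating set of $F$ must contain both $u$ and $v$ in order to totally dominate the leaves of $T$. Hence $\{u,v\}$ is the unique $\gamma_t$-set of $T$, giving $\gamma_t(T)=2$ and $\sharp\gamma_t(T)=1$. This is the crucial point that distinguishes the diameter-$3$ case from the star case handled in Claim~\ref{c:clm1}: here there is only one choice on $T$, not $t$ choices.

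I would then peel off the component $T$ and consider $F'=F-V(T)$. Since $T$ is a whole component and $F$ has no isolated vertex, $F'$ also has no isolated vertex. Every $\gamma_t$-set of $F$ decomposes uniquely as $\{u,v\}$ together with a $\gamma_t$-set of $F'$, so $\gamma_t(F)=\gamma_t(F')+2$ and $\sharp\gamma_t(F)=\sharp\gamma_t(F')$. Writing $n'=n(F')$ and $\gamma_t'=\gamma_t(F')$, this produces the identity $n-\gamma_t=(n'-\gamma_t')+(s+t)$, in which $s+t\geq 2$ is what will drive strictness.

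The conclusion would then follow in two cases. If $F'$ is empty, then $\sharp\gamma_t(F)=1<(1+\sqrt{2})^{s+t}=(1+\sqrt{2})^{n-\gamma_t}$, since $s+t\geq 2$. Otherwise, the induction hypothesis already set up in the proof of Theorem~\ref{t:thm1} applies to $F'$ and gives $\sharp\gamma_t(F')\leq(1+\sqrt{2})^{n'-\gamma_t'}$; combining this with the identity above yields
\[\sharp\gamma_t(F)=\sharp\gamma_t(F')\leq(1+\sqrt{2})^{n'-\gamma_t'}<(1+\sqrt{2})^{(n'-\gamma_t')+(s+t)}=(1+\sqrt{2})^{n-\gamma_t}.\]
There is no real obstacle: the argument is an analogue of the peeling step used in Claim~\ref{c:clm1}, with the $s+t\geq 2$ slack in the exponent delivering strict inequality for free. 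The only details that need a brief verification are the uniqueness of the $\gamma_t$-set of a double star (so that the count on $T$ contributes only a factor $1$) and the preservation of the ``no isolated vertex'' hypothesis in $F'$, both of which are immediate.
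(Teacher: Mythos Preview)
Your proposal is correct and follows essentially the same approach as the paper: remove the diameter-$3$ component $T$, use that $T$ has a unique $\gamma_t$-set of size~$2$, and apply induction to $F'=F-V(T)$ to get $\sharp\gamma_t(F)=\sharp\gamma_t(F')\le(1+\sqrt{2})^{n'-\gamma_t'}\le(1+\sqrt{2})^{n-\gamma_t-2}<(1+\sqrt{2})^{n-\gamma_t}$. Your explicit treatment of the case $F'=\emptyset$ is a small extra care that the paper leaves implicit, but otherwise the arguments coincide.
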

\begin{proof}[of Claim \ref{c:clm2}]
Suppose that $F$ contains a component $T$ of diameter $3$.  
Note that $T$ has a unique minimum total dominating set. 
The forest $F' = F - V(T)$ 
has order $n'\leq n-4$,
no isolated vertex, and
total domination number $\gamma_t' = \gamma_t-2$. 
By induction, we obtain
$$
\sharp\gamma_t(F)=\sharp\gamma_t(F')\le (1+\sqrt{2})^{n'-\gamma_t'}\le (1+\sqrt{2})^{n-\gamma_t-2}<(1+\sqrt{2})^{n-\gamma_t}.$$
\end{proof}

By Claim~\ref{c:clm1} and Claim~\ref{c:clm2}, we may assume that there is a component of $F$ that has diameter at least~$4$, for otherwise the desired result follows. Let $T$ be such a component of $F$. Let $uvwxy\ldots r$ be a longest path in $T$,
and consider $T$ as rooted in $r$.
For a vertex $z$ of $T$, 
let $V_z$ be the set that contains $z$ and all its descendants.

\begin{claim}\label{c:clm3}
If $d_F(w) \ge 3$, then $\sharp\gamma_t(F) < (1+\sqrt{2})^{n-\gamma_t}$.
\end{claim}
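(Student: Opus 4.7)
The plan is to exploit the rigid local structure at $w$ forced by $d_F(w)\ge 3$ together with the longest-path assumption: delete a small subtree near $w$ to reduce $F$ to a smaller forest, and apply the induction hypothesis so that the exponent $n-\gamma_t$ strictly drops. Because $uvwxy\ldots r$ is a longest path in the rooted tree $T$, every child of $w$ different from $v$ is either a leaf of $T$ or a support vertex whose children in $T$ are all leaves. The leaf $u$ forces $v\in D$ for every $\gamma_t$-set $D$ of $F$, and more generally every support-vertex child of $w$ lies in every $\gamma_t$-set. The hypothesis $d_F(w)\ge 3$ provides at least one further child $z$ of $w$, and I would split into two cases depending on whether $z$ is a leaf.

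In the first case, $w$ has a leaf child $z$, which forces $w\in D$ for every $\gamma_t$-set. Setting $F'=F-z$, one shows $\gamma_t(F')=\gamma_t$: otherwise a hypothetical $\gamma_t$-set $D'$ of $F'$ of size $\gamma_t-1$ without $w$ would contain a leaf $u'$ of some support-vertex child $v'$ of $w$ (to dominate $v'$), and $(D'\setminus\{u'\})\cup\{w\}$ would be a TDS of $F$ of size $\gamma_t-1$, contradicting $\gamma_t(F)=\gamma_t$. The $\gamma_t$-sets of $F$ then inject into those of $F'$ via $D\mapsto D$ (since $z\notin D$ in a $\gamma_t$-set and no other vertex's neighborhood changes), so by induction $\sharp\gamma_t(F)\le \sharp\gamma_t(F')\le (1+\sqrt{2})^{(n-1)-\gamma_t}<(1+\sqrt{2})^{n-\gamma_t}$.

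In the second case, every child of $w$ other than $v$ is a support vertex, giving support-vertex children $v=v_1,\ldots,v_s$ with $s\ge 2$ and $\ell_i\ge 1$ leaf children each. A size comparison ($\{w,v_1,\ldots,v_s\}$ totally dominates $V_w$ with $s+1$ vertices, whereas avoiding $w$ requires one leaf per $v_i$ for $\ge 2s\ge s+2$ vertices in $V_w$) forces $w\in D$, so $D\cap V_w=\{w,v_1,\ldots,v_s\}$ is uniquely determined in every $\gamma_t$-set. Setting $\hat F=F-V_w$ (which has no isolated vertex because $x$ still has its neighbor $y$), the $\gamma_t$-sets of $F$ correspond bijectively to the size-$(\gamma_t-s-1)$ subsets $\hat D$ of $V(\hat F)$ that totally dominate $V(\hat F)\setminus\{x\}$. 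A double-counting argument over the pairs $(\hat D,p)$ with $p\in N_{\hat F}(x)$ and $\hat D\cup\{p\}$ a min TDS of $\hat F$ yields $\sharp\gamma_t(F)\le \sharp\gamma_t(\hat F)$, and induction on $\hat F$ then gives $\sharp\gamma_t(F)\le (1+\sqrt{2})^{n(\hat F)-\gamma_t(\hat F)}\le (1+\sqrt{2})^{n-\gamma_t-\sum_i\ell_i}<(1+\sqrt{2})^{n-\gamma_t}$, since $\sum_i\ell_i\ge s\ge 2$.

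The main obstacle is the double-counting in Case 2: one has to relate $\sharp\gamma_t(F)$, which counts ``partial'' min total dominating sets of $\hat F$ (relaxed so that $x$ need not be dominated), to the ordinary count $\sharp\gamma_t(\hat F)$. The key observation is that each such partial min TDS extends to a min TDS of $\hat F$ by adjoining any of the $|N_{\hat F}(x)|$ neighbors of $x$, while every min TDS of $\hat F$ contains at least one neighbor of $x$; pair-counting therefore yields $\sharp\gamma_t(F)\cdot|N_{\hat F}(x)|\le \sharp\gamma_t(\hat F)\cdot|N_{\hat F}(x)|$, i.e.\ the clean inequality $\sharp\gamma_t(F)\le \sharp\gamma_t(\hat F)$ without loss. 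The slack $\sum_i\ell_i\ge 2$ in the exponent then closes the argument.
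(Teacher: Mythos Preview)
Your Case~1 (when $w$ has a leaf child) is correct and matches the paper's treatment of the sub-case where the chosen extra child $v'$ of $w$ is a leaf.

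Case~2 has a real gap. The double-counting step asserts that for every ``partial'' set $\hat D$ of size $\gamma_t-s-1$ dominating $V(\hat F)\setminus\{x\}$ and every $p\in N_{\hat F}(x)$, the set $\hat D\cup\{p\}$ is a \emph{minimum} total dominating set of $\hat F$. That is only true when $\gamma_t(\hat F)=\gamma_t-s$. It can equally happen that $\gamma_t(\hat F)=\gamma_t-s-1$ (namely when some partial $\hat D$ already dominates $x$); then every $\hat D\cup\{p\}$ has size $\gamma_t(\hat F)+1$ and is \emph{not} minimum, so the pair-count on the left collapses to $0$ and the inequality $\sharp\gamma_t(F)\le\sharp\gamma_t(\hat F)$ can fail outright. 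Concretely, take the tree $F$ on $\{u_1,v_1,u_2,v_2,w,x,c,y,a,b\}$ with edges $u_iv_i$, $v_iw$ ($i=1,2$), $wx$, $xc$, $xy$, $ya$, $ab$; a longest path is $u_1v_1wxyab$, so the setup of Claim~3 applies with $d_F(w)=3$ and $s=2$. One checks $\gamma_t(F)=6$ with exactly the two $\gamma_t$-sets $\{v_1,v_2,w,x,a,y\}$ and $\{v_1,v_2,w,x,a,b\}$, so $\sharp\gamma_t(F)=2$. But $\hat F=F-V_w$ is the tree on $\{x,c,y,a,b\}$ with $\gamma_t(\hat F)=3$ and the \emph{unique} minimum total dominating set $\{x,a,y\}$, so $\sharp\gamma_t(\hat F)=1<2$. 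Thus your key inequality is false here (even though the claim itself holds for this $F$).

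The paper sidesteps this by removing far less: instead of all of $V_w$, it deletes only $V_{v'}$ for a single non-leaf child $v'$ of $w$, observes that $\gamma_t(F')=\gamma_t-1$ for $F'=F-V_{v'}$, and uses the injection $D\mapsto D\setminus\{v'\}$ from $\gamma_t$-sets of $F$ to $\gamma_t$-sets of $F'$. Since $n(F')\le n-2$, the exponent drops by at least one and the strict inequality follows directly. Your Case~2 can be repaired along the same lines by deleting only one $V_{v_i}$ rather than $V_w$.
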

\begin{proof}[of Claim \ref{c:clm3}]
Suppose that $d_F(w) \ge 3$, which implies that $w$ belongs to every $\gamma_t$-set of $F$, because either $w$ is a support vertex or $w$ is the only neighbor of two support vertices, that is no leaf. Let $v'$ be a child of $w$ distinct from $v$. 
Let $F' = F - V_{v'}$.
If $v'$ is an endvertex, then $F'$ 
has order $n'=n-1$,
no isolated vertex, and
total domination number $\gamma_t' = \gamma_t$. 
By induction, we obtain
$$\sharp\gamma_t(F) \le \sharp\gamma_t(F') \le (1+\sqrt{2})^{n'-\gamma_t'} = (1+\sqrt{2})^{n-\gamma_t-1} < (1+\sqrt{2})^{n-\gamma_t}.$$
If $v'$ is not an endvertex, then $F'$ 
has order $n'\leq n-2$,
no isolated vertex, and
total domination number $\gamma_t' = \gamma_t-1$. Note that if $T$ is a minimum total dominating set of $F$, $T-\{v\}$ is a total dominating set of $F'$, since $v'$ is a support vertex and $v$ and $w$ are part of every minimum total dominating set of $F$. 
By induction, we obtain
$$\sharp\gamma_t(F) \le \sharp\gamma_t(F') \le (1+\sqrt{2})^{n'-\gamma_t'} \le (1+\sqrt{2})^{n-\gamma_t-1} < (1+\sqrt{2})^{n-\gamma_t}.$$
In both cases, $\sharp\gamma_t(F) < (1+\sqrt{2})^{n-\gamma_t}$.
\end{proof}

By Claim~\ref{c:clm3}, we may assume that $d_F(w)=2$, 
for otherwise the desired result holds.

\begin{claim}\label{c:clm4}
If $d_F(v) \ge 3$, then $\sharp\gamma_t(F) < (1+\sqrt{2})^{n-\gamma_t}$.
\end{claim}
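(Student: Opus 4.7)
The plan is to exploit the longest-path structure of $T$ to pin down the local shape at $v$ completely, and then to carry out a ``contract the extra pendants'' reduction in the spirit of Claim~\ref{c:clm1}. First I would establish a structural fact: since $r$ is an endpoint of a longest path of $T$, it is a leaf, and every leaf of $T$ lies at distance at most $\mathrm{diam}(T)$ from $r$. The vertex $u$ already realizes this maximum, and every child of $v$ sits at the same depth as $u$; so if some child of $v$ had a proper descendant, then $T$ would contain a leaf farther from $r$ than $u$, a contradiction. Combined with $d_F(v)\ge 3$, this forces $v$ to be a support vertex whose non-$w$ neighbors $u=u_1,u_2,\ldots,u_t$ are all leaves, with $t\ge 2$.

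Next I would perform the reduction $F':=F-\{u_2,\ldots,u_t\}$. Then $F'$ has order $n-t+1\ge 2$, has no isolated vertex (because $v$ still sees $u_1$ and $w$), and satisfies $\gamma_t(F')=\gamma_t(F)$: indeed $v$ is a support vertex of both forests, in any $\gamma_t$-set of $F$ the vertex $u_i$ serves only to dominate $v$ and can be traded for $u_1$ without changing the cardinality, and every $\gamma_t$-set of $F'$ is already a total dominating set of $F$ because it contains $v$ and hence dominates the deleted leaves.

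The combinatorial heart of the argument is then a two-case count. Minimality forces any $\gamma_t$-set $D$ of $F$ to meet $\{w,u_1,\ldots,u_t\}$ in exactly one vertex beyond the mandatory $v$. The $\gamma_t$-sets of $F$ using $w$ are in bijection with the $\gamma_t$-sets of $F'$ containing $w$, while the $\gamma_t$-sets of $F$ using some $u_i$ form $t$-to-one preimages of the $\gamma_t$-sets of $F'$ containing $u_1$, via the maps $u_1\mapsto u_i$. This yields
$$\sharp\gamma_t(F)\le t\cdot\sharp\gamma_t(F').$$

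Finally I would invoke the induction hypothesis applied to $F'$, obtaining
$$\sharp\gamma_t(F)\le t\,(1+\sqrt{2})^{n-t+1-\gamma_t}=(1+\sqrt{2})^{n-\gamma_t}\cdot t\,(1+\sqrt{2})^{1-t},$$
and then use the elementary fact, already exploited inside Claim~\ref{c:clm1}, that $t\,(1+\sqrt{2})^{1-t}<1$ for every $t\ge 2$; this gives the desired strict inequality. The main point to pin down carefully is the middle paragraph: the equality $\gamma_t(F')=\gamma_t(F)$ together with the stated bijection and $t$-to-one correspondence between $\gamma_t$-sets of $F$ and of $F'$. Once this bookkeeping is settled, the numerical conclusion is immediate.
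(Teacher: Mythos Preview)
Your argument is correct. The key reduction $F'=F-\{u_2,\ldots,u_t\}$ preserves $\gamma_t$ exactly as you describe, and the count $\sharp\gamma_t(F)=a+tb\le t(a+b)=t\cdot\sharp\gamma_t(F')$ (with $a$ and $b$ the number of $\gamma_t$-sets of $F'$ using $w$ respectively $u_1$) goes through cleanly; the strict inequality $t(1+\sqrt{2})^{1-t}<1$ for $t\ge 2$ then finishes the job.

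This is a genuinely different route from the paper. The paper does not keep $\gamma_t$ fixed; instead it splits the $\gamma_t$-sets of $F$ into three types according to whether they contain $w$, and if so whether they contain $x$, and bounds each type via a different reduced forest ($F-V_w$, $F-(N_F(v)\setminus\{w\})$, $F-(V_w\cup\{x\})$), arriving at the numerical inequality $\ell(1+\sqrt{2})+(1+\sqrt{2})^2+1<(1+\sqrt{2})^{\ell+1}$. Your approach is shorter and more transparent here, and it recycles the arithmetic of Claim~\ref{c:clm1} verbatim. The paper's finer three-way split, however, is not gratuitous: the same case analysis is reused in the proof of Theorem~\ref{t:thm2}, where the resulting inequality $\ell\beta+\beta^3+1\le\beta^{\ell+3}$ is precisely what pins down $\beta\approx 1.4865$. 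Your simpler reduction would require $t\le\beta^{t-1}$, which already fails at $t=2$ since $\beta<2$; so your method, while perfectly adequate for the base $1+\sqrt{2}$, would not transfer to the sharper bound.
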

\begin{proof}[of Claim \ref{c:clm4}]
Suppose that $\ell=d_F(v)-1\ge 2$.
Let 
$F'=F - V_w$, 
$F''=F - (N_F(v)\setminus \{ w\})$, and 
$F'''=F- (V_w\cup \{ x\})$. 
See Figure \ref{fig2.1} for an illustration.

\begin{figure}[H]
\begin{minipage}[t]{0.25\textwidth}
\begin{center}
\begin{tikzpicture}
\path[use as bounding box] (-1,0.5) rectangle (2,7);

\node[fill, circle, inner sep=1.3pt, label=right:$v$] (v1) at (0.5,3) {};
\node[fill, circle, inner sep=1.3pt] (v2) at (0,2) {};
\node[fill, circle, inner sep=1.3pt] (v21) at (0.3,2) {}; 
\node[fill, circle, inner sep=1.3pt] (v3) at (1,2) {};
\node[fill, circle, inner sep=1.3pt, label=right:$w$] (v4) at (0.5,4) {};
\node[fill, circle, inner sep=1.3pt, label=right:$x$] (v5) at (0.5,5) {};
\node[fill, circle, inner sep=1.3pt, label=right:$y$] (v6) at (0.5,6) {};

\node[fill, circle, inner sep=1.3pt](h1) at (0,4) {};
\node[fill, circle, inner sep=1.3pt](h2) at (-0.4,4) {};
\node[fill, circle, inner sep=1.3pt](h3) at (0,5) {};
\node[fill, circle, inner sep=1.3pt](h4) at (-0.4,5) {};
\node(h5) at (0.5,6.5) {};  
  
\draw (v1) -- (v2);
\draw (v1) -- (v21);
\draw (v1) -- (v3);
\draw (v1) -- (v4);
\draw (v4) -- (v5);
\draw (v5) -- (v6);

\draw[loosely dotted] (v21)--(v3);

\draw[dashed] (v5)--(h1);
\draw[dashed] (v5)--(h2);
\draw[dashed] (v6)--(h3);
\draw[dashed] (v6)--(h4);
\draw[dashed] (v6)--(h5);

\draw decorate [decoration={name=brace,mirror}, yshift=2ex]  {(0,1.5) -- node[below=0.4ex] {$\ell$}  (1,1.5)};

\node[align=left] at (0.5,1) {The forest $F$};

\end{tikzpicture}
\end{center}
\end{minipage}\begin{minipage}[t]{0.25\textwidth}
\begin{center}
\begin{tikzpicture}
\path[use as bounding box] (-1,0.5) rectangle (2,7);
\node[fill, circle, inner sep=1.3pt, label=right:$x$] (v5) at (0.5,5) {};
\node[fill, circle, inner sep=1.3pt, label=right:$y$] (v6) at (0.5,6) {};

\node[fill, circle, inner sep=1.3pt](h1) at (0,4) {};
\node[fill, circle, inner sep=1.3pt](h2) at (-0.4,4) {};
\node[fill, circle, inner sep=1.3pt](h3) at (0,5) {};
\node[fill, circle, inner sep=1.3pt](h4) at (-0.4,5) {};
\node(h5) at (0.5,6.5) {};  
  
\draw (v5) -- (v6);

\draw[dashed] (v5)--(h1);
\draw[dashed] (v5)--(h2);
\draw[dashed] (v6)--(h3);
\draw[dashed] (v6)--(h4);
\draw[dashed] (v6)--(h5);

\node[align=left] at (0.5,1) {The forest $F'$};

\end{tikzpicture}
\end{center}

\end{minipage}\begin{minipage}[t]{0.25\textwidth}
\begin{center}
\begin{tikzpicture}[auto]

\path[use as bounding box] (-1,0.5) rectangle (2,7);
\node[fill, circle, inner sep=1.3pt, label=right:$v$] (v1) at (0.5,3) {};

\node[fill, circle, inner sep=1.3pt, label=right:$w$] (v4) at (0.5,4) {};
\node[fill, circle, inner sep=1.3pt, label=right:$x$] (v5) at (0.5,5) {};
\node[fill, circle, inner sep=1.3pt, label=right:$y$] (v6) at (0.5,6) {};

\node[fill, circle, inner sep=1.3pt](h1) at (0,4) {};
\node[fill, circle, inner sep=1.3pt](h2) at (-0.4,4) {};
\node[fill, circle, inner sep=1.3pt](h3) at (0,5) {};
\node[fill, circle, inner sep=1.3pt](h4) at (-0.4,5) {};
\node(h5) at (0.5,6.5) {};  
 
\draw (v1) -- (v4);
\draw (v4) -- (v5);
\draw (v5) -- (v6);

\draw[dashed] (v5)--(h1);
\draw[dashed] (v5)--(h2);
\draw[dashed] (v6)--(h3);
\draw[dashed] (v6)--(h4);
\draw[dashed] (v6)--(h5);

\node[align=left] at (0.5,1) {The forest $F''$};

\end{tikzpicture}
\end{center}

\end{minipage}\begin{minipage}[t]{0.25\textwidth}
\begin{center}
\begin{tikzpicture}[auto]
\path[use as bounding box] (-1,0.5) rectangle (2,7);
\node (v5) at (0.5,5) {};
\node[fill, circle, inner sep=1.3pt, label=right:$y$] (v6) at (0.5,6) {};

\node[fill, circle, inner sep=1.3pt](h1) at (0,4) {};
\node[fill, circle, inner sep=1.3pt](h2) at (-0.4,4) {};
\node[fill, circle, inner sep=1.3pt](h3) at (0,5) {};
\node[fill, circle, inner sep=1.3pt](h4) at (-0.4,5) {};
\node(h5) at (0.5,6.5) {};  
  
\draw[dashed] (v6)--(h3);
\draw[dashed] (v6)--(h4);
\draw[dashed] (v6)--(h5);

\node[align=left] at (0.5,1) {The forest $F'''$};

\end{tikzpicture}
\end{center}

\end{minipage}

\caption{The important details of the forests $F$, $F'$, $F''$ and $F'''$.}\label{fig2.1}
\end{figure}
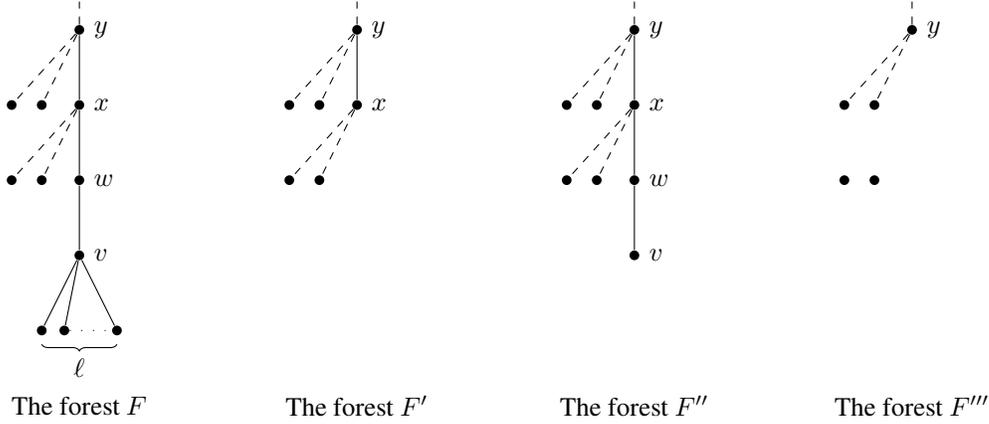

\begin{itemize}
\item There are at most $\ell\cdot \sharp\gamma_t(F')$ 
many $\gamma_t$-sets of $F$ that contain $v$ and a child of $v$
but do not contain $w$.
Furthermore, if such a $\gamma_t$-set exists, then
$F'$ has order $n'=n-\ell-2$,
no isolated vertex, and 
total domination number $\gamma_t'=\gamma_t-2$.
\item There are at most $\sharp\gamma_t(F'')$ 
many $\gamma_t$-sets of $F$ that contain $v$, $w$, and $x$. 
Furthermore, if such a $\gamma_t$-set exists, then
$F''$ has order $n''=n-\ell$,
no isolated vertex, and 
total domination number $\gamma_t''=\gamma_t-1$.
\item There are at most $\sharp\gamma_t(F''')$ 
many $\gamma_t$-sets of $F$ that contain both $v$ and $w$ 
but do not contain $x$. 
Furthermore, if such a $\gamma_t$-set exists, then
$F'''$ has order $n'''=n-\ell-3$,
no isolated vertex, and 
total domination number $\gamma_t'''=\gamma_t-2$.
\end{itemize}
Since all $\gamma_t$-sets of $F$ are of one of the three considered types, we obtain, by induction,
\begin{eqnarray*}
\sharp\gamma_t(F) & \le & \ell \cdot \sharp\gamma_t(F')+\sharp\gamma_t(F'')+\sharp\gamma_t(F''')\\
& \le & \ell (1+\sqrt{2})^{n-\ell-2-(\gamma_t-2)} + (1+\sqrt{2})^{n-\ell-(\gamma_t-1)} +(1+\sqrt{2})^{n-\ell-3-(\gamma_t-2)}\\
& = & (1+\sqrt{2})^{n-\gamma_t}(1+\sqrt{2})^{-\ell-1}(\ell (1+\sqrt{2})+(1+\sqrt{2})^2+1)\\
& < & (1+\sqrt{2})^{n-\gamma_t},
\end{eqnarray*}
where we use $\ell (1+\sqrt{2})+(1+\sqrt{2})^2+1 < (1+\sqrt{2})^{\ell+1}$ for all $\ell\ge 2$.
\end{proof}

By Claim~\ref{c:clm4}, 
we may assume that $d_F(v)=2$, 
for otherwise the desired result holds.

\begin{claim}\label{c:clm5}
If $x$ is a support vertex, then $\sharp\gamma_t(F) < (1+\sqrt{2})^{n-\gamma_t}$.
\end{claim}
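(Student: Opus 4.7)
The plan is to partition the $\gamma_t$-sets of $F$ into cases based on their intersection with $\{u,v,w\}$. Because $v$ is a support vertex for $u$ and $x$ is a support vertex for some leaf $x'$, every $\gamma_t$-set $D$ of $F$ contains both $v$ and $x$. Since $d_F(v) = d_F(w) = 2$ by Claims~\ref{c:clm3} and~\ref{c:clm4}, the vertex $v$ needs exactly one of $u, w$ in $D$, and a routine minimality argument (removing $u$ from $D$ if $w \in D$) rules out having both. This splits the $\gamma_t$-sets into Case~1 ($w \in D$, $u \notin D$) and Case~2 ($u \in D$, $w \notin D$).

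My first step is to work with the sub-forest $F' = F - \{u, v, w\}$, which has order $n-3$ and no isolated vertex, and whose component containing $x$ has at least the three vertices $x, x', y$, so $F'$ is not a disjoint union of copies of $K_2$. I would check that for every $\gamma_t$-set $T$ of $F'$ both $T \cup \{v, w\}$ and $T \cup \{u, v\}$ are $\gamma_t$-sets of $F$, and that the inverse restrictions $D \mapsto D \setminus \{v, w\}$ and $D \mapsto D \setminus \{u, v\}$ produce $\gamma_t$-sets of $F'$. Calling Case~1a the sub-case of Case~1 in which $x$ has a $D$-neighbour besides $w$, this establishes $|\text{Case 1a}| + |\text{Case 2}| = 2\sharp\gamma_t(F')$, and the inductive hypothesis, applied strictly because $F'$ is not a disjoint union of copies of $K_2$, yields $\sharp\gamma_t(F') < (1+\sqrt{2})^{n-\gamma_t-1}$.

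For the complementary Case~1b, in which $w$ is the only $D$-neighbour of $x$, structural consequences force a clean shape: no child of $x$ other than $w$ can be a support vertex (such a child would itself be forced into $D$, giving $x$ another $D$-neighbour), so all such children are leaves $x_1, \ldots, x_s$ with $s \ge 1$, and $y \notin D$. I then pass to $F'' = F - V_x$ of order $n - 4 - s$, map $D \mapsto D \setminus \{v, w, x\}$, and note that this restricted set of size $\gamma_t - 3$ is a total dominating set of $F''$ whenever $y$ has a $D$-neighbour other than $x$. Induction then gives $|\text{Case 1b}| \le \sharp\gamma_t(F'') \le (1+\sqrt{2})^{n-\gamma_t-1-s} \le (1+\sqrt{2})^{n-\gamma_t-2}$. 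Invoking the Pell-type identity $(1+\sqrt{2})^2 = 2(1+\sqrt{2}) + 1$ and adding the Case~1a+Case~2 bound, I obtain $\sharp\gamma_t(F) < 2(1+\sqrt{2})^{n-\gamma_t-1} + (1+\sqrt{2})^{n-\gamma_t-2} = (1+\sqrt{2})^{n-\gamma_t}$.

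The main obstacle is the sub-sub-case of Case~1b where $y$'s only $D$-neighbour is $x$ itself, so the restricted set fails to dominate $y$ in $F''$ and the map into $\gamma_t$-sets of $F''$ breaks down. I expect to show that this pathological configuration forces $\gamma_t(F - \{u, v\}) = \gamma_t - 1$, in which regime Case~2 is empty and the simpler bound $\sharp\gamma_t(F) \le \sharp\gamma_t(F - \{u, v\}) \le (1+\sqrt{2})^{n-\gamma_t-1}$ already does the job; the intuition is that a tds of $F - \{u, v\}$ of size $\gamma_t - 2$ would require $x$ and the next forced support vertex above $y$ to share a dominator, which contradicts the existence of a Case~1b set in which that region of the tree is sparsely occupied by $D$. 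If this mutual-exclusion is not transparent, a fallback is to iterate one reduction level further by peeling off $V_y$ and re-applying the Pell identity. A secondary technical point, handling $x$'s non-leaf, non-support-vertex children when they exist (which the longest-path hypothesis heavily constrains), is addressed by an analogous recursive removal.
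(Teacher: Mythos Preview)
Your overall decomposition matches the paper's: both note that $v$ and $x$ lie in every $\gamma_t$-set, split according to which of $u,w$ accompanies $v$, and use $F'=F-\{u,v,w\}$ to bound Case~1a and Case~2 by $2\sharp\gamma_t(F')<2(1+\sqrt{2})^{n-\gamma_t-1}$ via the strict induction hypothesis. (A small point: your claimed \emph{equality} $|\text{Case 1a}|+|\text{Case 2}|=2\sharp\gamma_t(F')$ fails when $\gamma_t(F')=\gamma_t-1$, as happens for instance when $F$ is the path $uvwxy$ with a pendant leaf at $x$; but only the inequality $\le$ is needed, and that holds.)

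The genuine gap is your treatment of Case~1b. Setting $F''=F-V_x$ forces you to confront the sub-sub-case in which $y$ has no $D$-neighbour other than $x$, and you do not resolve it: the assertion that this situation implies $\gamma_t(F-\{u,v\})=\gamma_t-1$ is only ``expected,'' and the sketched intuition does not amount to a proof. A second, related gap is your claim that every child of $x$ other than $w$ is a leaf in Case~1b. In fact such a child $w'$ may itself have a support-vertex child $v'$; then $v'\in D$ is dominated by one of its leaves rather than by $w'$, so $w'\notin D$ is perfectly consistent with $w$ being the unique $D$-neighbour of $x$.

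The paper dissolves both difficulties with a single change of auxiliary forest: instead of removing the descendants of $x$, it removes the \emph{closed neighbourhood} of $x$ (together with that of $v$), taking $F''=F-(N_F[v]\cup N_F[x])$. This deletes $y$ and every child of $x$, so in Case~1b the restricted set $D\setminus\{v,w,x\}$ is automatically a total dominating set of $F''$, with no sub-sub-case to worry about; one checks that $F''$ has no isolated vertex precisely because Case~1b forbids support vertices among the deleted neighbours of $x$ and $y$. The arithmetic then closes exactly as you wrote, via $2(1+\sqrt{2})+1=(1+\sqrt{2})^2$.
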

\begin{proof}[of Claim \ref{c:clm5}]
Suppose that $x$ is a support vertex,
which implies that $v$ and $x$ belong to every $\gamma_t$-set of $F$.
Let 
$F'=F- V_w$ and 
$F''=F - (N_F[v]\cup N_F[x])$. 
\begin{itemize}
\item There are at most $\sharp\gamma_t(F')$ 
many $\gamma_t$-sets of $F$ that contain $u$
but do not contain $w$.
Furthermore, if such a $\gamma_t$-set exists, then
$F'$ has order $n'=n-3$,
no isolated vertex, and 
total domination number $\gamma_t'=\gamma_t-2$.
\item There are at most $\sharp\gamma_t(F')$
many $\gamma_t$-sets of $F$ that contain $w$ 
and at least one other neighbour of $x$. 
Furthermore, if such a $\gamma_t$-set exists, then
$F'$ has order $n'=n-3$,
no isolated vertex, and 
total domination number $\gamma_t'=\gamma_t-2$.
\item There are at most $\sharp\gamma_t(F'')$ 
many $\gamma_t$-sets of $F$ that contain $w$ 
and no other neighbour of $x$. 
Furthermore, if such a $\gamma_t$-set exists, then
$F''$ has order $n''\leq n-5$,
no isolated vertex, and 
total domination number $\gamma_t''=\gamma_t-3$.
\end{itemize}
Since all $\gamma_t$-sets of $F$ are of one of the three considered types, we obtain, by induction,
\begin{eqnarray*}
\sharp\gamma_t(F) & \le & 2\sharp\gamma_t(F')+\sharp\gamma_t(F'')
< 2 (1+\sqrt{2})^{n-3-(\gamma_t-2)} + (1+\sqrt{2})^{n-5-(\gamma_t-3)}\\
&=&(1+\sqrt{2})^{n-\gamma_t}(1+\sqrt{2})^{-2}(2(1+\sqrt{2})+1)=(1+\sqrt{2})^{n-\gamma_t},
\end{eqnarray*}
where we use $2(1+\sqrt{2})+1=(1+\sqrt{2})^2$. Note that in $F'$ there is a component that contains a path of length two, in particular not every component of $F'$ is a $K_2$. 
\end{proof}

By Claim~\ref{c:clm5}, 
we may assume that $x$ is not a support vertex, 
for otherwise the desired result holds.

\begin{claim}\label{c:clm6}
If $x$ has a child that is a support vertex, then $\sharp\gamma_t(F) < (1+\sqrt{2})^{n-\gamma_t}$.
\end{claim}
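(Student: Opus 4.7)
The plan is to reduce to the forest $F'=F-V_w$, where $V_w=\{u,v,w\}$ (using $d_F(v)=d_F(w)=2$ from Claims~\ref{c:clm3} and \ref{c:clm4}). The vertex $x$ retains at least the two neighbors $w'$ and $y$ in $F'$---$w'$ because it is a child of $x$, and $y$ because the path $uvwxy\ldots r$ has length at least $4$---and no other vertex of $F'$ is affected by the removal, so $F'$ is a forest of order $n-3$ with no isolated vertex, eligible for the inductive hypothesis.

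The key structural observation is that both $v$ (support vertex of $u$) and $w'$ (support vertex of an endvertex descendant) lie in every $\gamma_t$-set $D$ of $F$. Since $v\in D$ must have a neighbor in $D$ and $v$'s only neighbors are $u$ and $w$, at least one of them lies in $D$. If both were in $D$, then $D\setminus\{w\}$ would still be a total dominating set: $v$ is dominated by $u$, the vertex $w$ itself is dominated by $v$, and $w$'s only other neighbor $x$ is dominated by $w'\in D$; this would contradict the minimality of $D$. Hence \emph{exactly} one of $u,w$ lies in $D$. The hypothesis of the claim---the support vertex $w'$ adjacent to $x$---is used essentially here, to free $w$ from any responsibility of dominating $x$.

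Next I would show that $D\mapsto D\setminus V_w$ is a $2$-to-$1$ surjection from the $\gamma_t$-sets of $F$ onto those of $F'$. In the forward direction, the only vertex of $F'$ whose dominator could have been removed is $x$, but $w'\in D\setminus V_w$ still dominates it, so $D\setminus V_w$ is a total dominating set of $F'$ of size $\gamma_t-2$; this matches $\gamma_t(F')$, because conversely extending any total dominating set of $F'$ by $\{u,v\}$ yields a total dominating set of $F$, giving $\gamma_t\le \gamma_t(F')+2$. In the backward direction, for any $\gamma_t$-set $D'$ of $F'$, both $D'\cup\{u,v\}$ and $D'\cup\{v,w\}$ are $\gamma_t$-sets of $F$ (a direct check on $u,v,w,x$), and these are precisely the two preimages of $D'$. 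Consequently $\sharp\gamma_t(F)=2\,\sharp\gamma_t(F')$.

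Invoking the inductive hypothesis gives $\sharp\gamma_t(F')\le(1+\sqrt{2})^{(n-3)-(\gamma_t-2)}=(1+\sqrt{2})^{n-\gamma_t-1}$, and the desired strict inequality $\sharp\gamma_t(F)<(1+\sqrt{2})^{n-\gamma_t}$ follows from the elementary bound $2<1+\sqrt{2}$. The most delicate step is the minimality argument pinning down that exactly one of $u,w$ lies in $D$; once that is in hand, the rest is uniform bookkeeping, independent of the detailed structure of the subtree rooted at $w'$ (whether $w'$ has only endvertex children or also has further support-vertex children), since only the fact that $w'\in D$ dominates $x$ is used.
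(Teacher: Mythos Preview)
Your proof is correct and follows essentially the same approach as the paper: remove $V_w=\{u,v,w\}$, use that $w'$ belongs to every $\gamma_t$-set to conclude $\sharp\gamma_t(F)=2\,\sharp\gamma_t(F')$ with $\gamma_t(F')=\gamma_t-2$ and $n(F')=n-3$, and finish with $2<1+\sqrt{2}$. The paper asserts the identity $\sharp\gamma_t(F)=2\,\sharp\gamma_t(F')$ without further comment, whereas you supply the minimality argument (exactly one of $u,w$ lies in $D$) and the explicit $2$-to-$1$ bijection that justify it.
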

\begin{proof}[of Claim \ref{c:clm6}]
Suppose that $x$ has a child $w'$ that is a support vertex. 
Clearly, the vertex $w'$ is distinct from $w$
and belongs to every $\gamma_t$-set of $F$. 
The forest $F'=F-V_w$
has order $n'=n-3$,
no isolated vertex, and 
total domination number $\gamma_t'=\gamma_t-2$.
By induction, we obtain
\begin{eqnarray*}
\sharp\gamma_t(F)&=&2\sharp\gamma_t(F')
\le 2 (1+\sqrt{2})^{n-3-(\gamma_t-2)}
=(1+\sqrt{2})^{n-\gamma_t}2(1+\sqrt{2})^{-1}
<(1+\sqrt{2})^{n-\gamma_t },
\end{eqnarray*}
where we use $2<(1+\sqrt{2})$.
\end{proof}

By Claim \ref{c:clm6}, 
we may assume that no child of $x$ is a support vertex, 
for otherwise the desired result holds.
Together with Claims~\ref{c:clm3} and~\ref{c:clm4},
we may assume that the subforest of $F$ induced by $V_x$
arises from a star $K_{1,q}$ for some $q\geq 1$
by subdividing every edge twice.
Let 
$F'=F - V_x$, 
$F''=F- (V_x\cup \{y\})$, and 
$F'''=F- (V_x\cup N_F[y])$. 
\begin{itemize}
\item There are at most $2^q \sharp\gamma_t(F')$ 
many $\gamma_t$-sets of $F$ that do not contain $x$. 
Furthermore, if such a $\gamma_t$-set exists, then
$F'$ has order $n'=n-3q-1$,
no isolated vertex, and 
total domination number $\gamma_t'=\gamma_t-2q$.
\item There are at most $(2^q-1)\sharp\gamma_t(F'')$ 
many $\gamma_t$-sets of $F$ that contain $x$ but do not contain $y$. 
Furthermore, if such a $\gamma_t$-set exists, then
$F''$ has order $n''=n-3q-2$,
no isolated vertex, and 
total domination number $\gamma_t''=\gamma_t-2q-1$.
\item There are at most $2^q\sharp\gamma_t(F''')$ 
many $\gamma_t$-sets of $F$ that contain both $x$ and $y$. 
Furthermore, if such a $\gamma_t$-set exists, then
$F'''$ has order $n'''\leq n-3q-3$,
no isolated vertex, and 
total domination number $\gamma_t'''=\gamma_t-2q-2$.
\end{itemize}
Since all $\gamma_t$-sets of $F$ are of one of the three considered types, we obtain, by induction,
\begin{eqnarray*}
\sharp\gamma_t(F) & \le & 2^q \sharp\gamma_t(F')+(2^q-1)\sharp\gamma_t(F'')+2^q \sharp\gamma_t(F''')\\
& \le & 2^q  (1+\sqrt{2})^{n-3q-1-(\gamma_t-2q)} \\
&& + (2^q-1) (1+\sqrt{2})^{n-3q-2-(\gamma_t-2q-1)} \\
&& +2^q (1+\sqrt{2})^{n-3q-3-(\gamma_t-2q-2)}  \\
& = & (1+\sqrt{2})^{n-\gamma_t}(1+\sqrt{2})^{-q-1} (2^q+2^q-1+2^q)\\
& < & (1+\sqrt{2})^{n-\gamma_t},
\end{eqnarray*}
where we use $3\cdot 2^q-1< (1+\sqrt{2})^{q+1}$ for all $q \ge 1$. 
This completes the proof of Theorem~\ref{t:thm1}.
\end{proof}

We proceed to the proof of Theorem~\ref{t:thm2},
which uses exactly the same approach as Theorem~\ref{t:thm1}.

\begin{proof}[of Theorem~\ref{t:thm2}] 
By induction on $n$,
we show that $\sharp\gamma_t(F) \le \beta^n$, 
where $\beta$ is the unique positive real solution of the equation 
$2\beta + \beta^3 +1 =\beta^5,$
that is, $\beta \approx 1.4865$. 
If $n=2$, then $F=K_2$ and $\sharp\gamma_t(F)=1<\beta^2$.
Now, let $n \ge 3$.
\setcounter{claim}{0}

\begin{claim}\label{c:claim1}
If $F$ contains a component $T$ that is a star, then $\sharp\gamma_t(F) \le \beta^n$. 
\end{claim}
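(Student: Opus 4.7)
The plan is to mimic Claim~\ref{c:clm1} of the proof of Theorem~\ref{t:thm1} almost verbatim, with the constant $1+\sqrt{2}$ replaced by $\beta$ and the exponent $n-\gamma_t$ replaced by $n$. I would write the star component as $T = K_{1,t}$ with $t\ge 1$ and set $F' = F - V(T)$. Every $\gamma_t$-set of $F$ decomposes uniquely into a $\gamma_t$-set of $F'$ together with a set of the form $\{c,\ell\}$, where $c$ is the center of $T$ and $\ell$ is one of its $t$ leaves, so
$$\sharp\gamma_t(F) \;=\; t\cdot \sharp\gamma_t(F').$$

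I would then split into two cases. If $F'$ is empty, then $F = T$ is a star on $n=t+1$ vertices, $\sharp\gamma_t(F)=t$, and the claim reduces directly to $t\le \beta^{t+1}$. Otherwise $F'$ has order $n-t-1\ge 2$ and no isolated vertex (removing an entire component does not create new isolates), so the inductive hypothesis yields $\sharp\gamma_t(F') \le \beta^{n-t-1}$, and the claim again reduces to the same purely numerical inequality
$$t\;\le\;\beta^{t+1}\qquad\text{for all }t\ge 1.$$

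Verifying this inequality is the only real obstacle, but it is routine. I would handle $t = 1, 2, 3$ by direct substitution using $\beta \approx 1.4865$ (giving $\beta^2\approx 2.21$, $\beta^3\approx 3.29$, $\beta^4\approx 4.88$), and then extend to $t\ge 3$ by induction: since $\beta > 4/3 \ge 1 + 1/t$ for $t\ge 3$, multiplying $t\le \beta^{t+1}$ through by $\beta$ yields $\beta^{t+2}\ge \beta\cdot t \ge t+1$. The bound $\beta>4/3$ itself follows from the defining equation $2\beta+\beta^3+1=\beta^5$ by observing that at $\beta=4/3$ the left side strictly exceeds the right while the function $\beta^5-\beta^3-2\beta-1$ is increasing for $\beta\ge 1$. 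No graph-theoretic subtleties arise beyond those already exploited in Claim~\ref{c:clm1}.
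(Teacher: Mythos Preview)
Your proposal is correct and follows essentially the same approach as the paper: write $T=K_{1,t}$, use $\sharp\gamma_t(F)=t\cdot\sharp\gamma_t(F')$ for $F'=F-V(T)$, apply induction to $F'$, and reduce to the numerical inequality $t\le\beta^{t+1}$. You are simply more explicit than the paper in treating the case where $F'$ is empty and in justifying $t\le\beta^{t+1}$, but the argument is the same.
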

\begin{proof}[of Claim \ref{c:claim1}]
Suppose that $F$ contains a component $T$ that is a star. 
Thus, $T = K_{1,t}$ for some $t \ge 1$. 
The forest $F' = F - V(T)$ 
has order $n'=n-t-1$
and no isolated vertex.
By induction, we obtain
$\sharp\gamma_t(F) = t\cdot \sharp\gamma_t(F') \le t \beta^{n-t-1}\le \beta^n,
$
where we use $t\leq \beta^{t+1}$.
\end{proof}

\begin{claim}\label{c:claim2}
If $F$ contains a component $T$ of diameter $3$, 
then $\sharp\gamma_t(F) \le \beta^n$.
\end{claim}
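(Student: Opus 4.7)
The plan is to mirror exactly the strategy of Claim~\ref{c:clm2} in the proof of Theorem~\ref{t:thm1}. A component $T$ of diameter~$3$ is a double star $S_{a,b}$ with $a,b\ge 1$, whose two adjacent centers form the unique minimum total dominating set of $T$; in particular $|V(T)|=a+b+2\ge 4$ and $\sharp\gamma_t(T)=1$.

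Set $F'=F-V(T)$. Since $T$ is an entire component of $F$ and $F$ has no isolated vertex, $F'$ has no isolated vertex either. Because the two centers of $T$ must lie in every $\gamma_t$-set of $F$ (each being the unique support of a leaf), every $\gamma_t$-set of $F$ is obtained by taking this unique pair together with a $\gamma_t$-set of $F'$. Hence
\[
\sharp\gamma_t(F)=\sharp\gamma_t(F'),
\]
and $n':=n(F')=n-|V(T)|\le n-4$.

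If $F'$ is non-empty, the induction hypothesis gives $\sharp\gamma_t(F')\le \beta^{n'}\le \beta^{n-4}$, and since $\beta>1$ this is at most $\beta^n$. If $F'$ is empty (i.e.\ $F=T$), then $\sharp\gamma_t(F)=1\le\beta^n$ holds trivially because $n\ge 4$ and $\beta>1$. In either case $\sharp\gamma_t(F)\le\beta^n$.

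There is essentially no obstacle in this claim: the slack between $\beta^{n-4}$ and $\beta^n$ is enormous, so one does not need to invoke the defining equation $2\beta+\beta^3+1=\beta^5$ at all. The only point that requires a sentence of care is verifying that the two centers of $T$ are forced into every $\gamma_t$-set of $F$, which is immediate since each is the only neighbour of at least one leaf of $F$.
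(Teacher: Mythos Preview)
Your proof is correct and follows essentially the same approach as the paper: remove the diameter-$3$ component $T$, observe that $\sharp\gamma_t(F)=\sharp\gamma_t(F')$ because $T$ has a unique $\gamma_t$-set, and apply induction to $F'$ of order at most $n-4$. You add a bit more justification (the double-star structure and the $F'=\emptyset$ case) than the paper does, but the argument is identical.
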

\begin{proof}[of Claim \ref{c:claim2}]
Suppose that $F$ contains a component $T$ of diameter $3$.
The forest $F' = F - V(T)$ 
has order $n'\le n-4$ and no isolated vertex. 
By induction, we obtain
$\sharp\gamma_t(F) = \sharp\gamma_t(F') \le  \beta^{n'}< \beta^n.$
\end{proof}

By Claim~\ref{c:clm1} and Claim~\ref{c:clm2}, we may assume that every component of $F$ has diameter at least~$4$, for otherwise the desired result follows. Let $T$ be an arbitrary component of $F$. Let $uvwxy\ldots r$ be a longest path in $T$,
and consider $T$ as rooted in $r$.
For a vertex $z$ of $T$, 
let $V_z$ be the set that contains $z$ and all its descendants.

\begin{claim}\label{c:claim3}
If $d_F(w) \ge 3$, then $\sharp\gamma_t(F) \le \beta^n$. 
\end{claim}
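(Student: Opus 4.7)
My plan is to mirror Claim~\ref{c:clm3} in the proof of Theorem~\ref{t:thm1}. The structural observation there carries over unchanged: since $d_F(w)\ge 3$ and $v$ is a support vertex of the leaf $u$ at the end of a longest path, $w$ lies in every $\gamma_t$-set of $F$. Indeed, either $w$ is itself a support vertex (if one of its children is a leaf), or $w$ is the unique common neighbour of two support vertices (namely $v$ together with some sibling), in which case swapping the endvertices of those two support vertices for $w$ yields a smaller total dominating set, contradicting minimality.

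I would then pick a child $v'\neq v$ of $w$ and split on whether $v'$ is an endvertex. If $v'$ is an endvertex, then no $\gamma_t$-set $D$ of $F$ contains $v'$: since $w\in D$ dominates $v'$ and $w$ already has $v\in D$ as a neighbour in $D$, removing $v'$ from $D$ would leave it a total dominating set, contradicting minimality. Hence $D\mapsto D$ injects the $\gamma_t$-sets of $F$ into those of $F'=F-v'$, which has order $n-1$ and no isolated vertex, and the induction hypothesis delivers $\sharp\gamma_t(F)\le \sharp\gamma_t(F')\le \beta^{n-1}\le \beta^n$.

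If $v'$ is not an endvertex, the maximality of the longest path forces every descending path from $v'$ to have length at most one, so $v'$ is a support vertex all of whose children are leaves. Let $F'=F-V_{v'}$, so $n'\le n-2$ and $F'$ has no isolated vertex because $w$ retains $v$ as a neighbour. Every $\gamma_t$-set $D$ of $F$ then contains $v'$ but no leaf child of $v'$ (any such leaf would be redundant, as $w\in D$ already provides $v'$ with a neighbour in $D$), and $D\setminus\{v'\}$ is a total dominating set of $F'$ of size $\gamma_t(F)-1$. Verifying as in the corresponding case of Theorem~\ref{t:thm1} that $\gamma_t(F')=\gamma_t(F)-1$, the map $D\mapsto D\setminus\{v'\}$ injects the $\gamma_t$-sets of $F$ into those of $F'$, and the induction hypothesis yields $\sharp\gamma_t(F)\le \sharp\gamma_t(F')\le \beta^{n-2}\le \beta^n$.

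The main obstacle is the equality $\gamma_t(F')=\gamma_t(F)-1$ in the second case, which is required to guarantee that the restriction map lands in $\gamma_t$-sets of $F'$ rather than in merely some total dominating sets of $F'$ of non-minimum size. The upper bound $\gamma_t(F')\le \gamma_t(F)-1$ is immediate from the restriction itself, while the matching lower bound is obtained by extending a $\gamma_t$-set of $F'$ through $\{v'\}$, exactly as in Case~B of Claim~\ref{c:clm3} of Theorem~\ref{t:thm1}. Modulo this standard check, the induction closes at once.
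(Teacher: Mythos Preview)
Your proposal is correct and follows essentially the same approach as the paper. The paper's own proof of this claim is even terser---it simply sets $F'=F-V_{v'}$, asserts $\sharp\gamma_t(F)\le \sharp\gamma_t(F')$, and applies induction, implicitly deferring the case split and the verification of $\gamma_t(F')$ to the detailed argument already given in Claim~\ref{c:clm3} of Theorem~\ref{t:thm1}, which is exactly what you spell out.
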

\begin{proof}[of Claim \ref{c:claim3}]
Suppose that $d_F(w) \ge 3$, 
which implies that $w$ belongs to every $\gamma_t$-set of $F$. Let $v'$ be a child of $w$ distinct from $v$.
The forest $F' = F - V_{v'}$ has order $n'<n$
and no isolated vertex.
Since $\sharp\gamma_t(F) \le \sharp\gamma_t(F')$,
we obtain, by induction, 
$
\sharp\gamma_t(F) \le \sharp\gamma_t(F') \le \beta^{n'}<\beta^{n}.
$
\end{proof}

By Claim~\ref{c:claim3}, we may assume that $d_F(w)=2$, 
for otherwise the desired result holds.

\begin{claim}\label{c:claim4}
If $d_F(v) \ge 3$, then $\sharp\gamma_t(F) \le \beta^n$.
\end{claim}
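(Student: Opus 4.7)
The plan is to mimic Claim \ref{c:clm4} from the proof of Theorem~\ref{t:thm1}, but now tracking only the order of the forest rather than $n-\gamma_t$. I would set $\ell = d_F(v) - 1 \ge 2$ and introduce the same three auxiliary subforests
$$F' = F - V_w,\qquad F'' = F - (N_F(v)\setminus\{w\}),\qquad F''' = F - (V_w \cup \{x\}),$$
whose orders are $n-\ell-2$, $n-\ell$, and $n-\ell-3$, respectively. Note that $v$ is a support vertex (its $\ell\ge 2$ children are leaves by the choice of a longest path), so every $\gamma_t$-set of $F$ contains $v$ together with at least one of its neighbors.

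Next I would partition the $\gamma_t$-sets of $F$ into exactly three types: (a) those containing $v$ and some child of $v$ but not $w$, (b) those containing $v$, $w$, and $x$, and (c) those containing $v$ and $w$ but not $x$. Exactly as in Claim \ref{c:clm4} of Theorem~\ref{t:thm1}, the number of such sets is bounded by $\ell\cdot\sharp\gamma_t(F')$, $\sharp\gamma_t(F'')$, and $\sharp\gamma_t(F''')$, respectively; moreover, whenever a $\gamma_t$-set $D$ of the relevant type exists, the associated forest has no isolated vertex (for type (c) this uses that $D$ avoiding $x$ forces $y$ to have a neighbor in $D$ other than $x$, so $y$ is not isolated in $F'''$).

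Applying the induction hypothesis then yields
$$\sharp\gamma_t(F) \;\le\; \ell\beta^{n-\ell-2} + \beta^{n-\ell} + \beta^{n-\ell-3} \;=\; \beta^{n-\ell-3}\bigl(\ell\beta + \beta^3 + 1\bigr),$$
so it remains to verify $\ell\beta + \beta^3 + 1 \le \beta^{\ell+3}$ for every $\ell \ge 2$. For $\ell = 2$ this is precisely the defining identity $2\beta+\beta^3+1=\beta^5$ of $\beta$, so equality holds. For $\ell \ge 3$ the right-hand side is multiplied by $\beta > 1$ each time $\ell$ grows by $1$ while the left-hand side grows only by the additive constant $\beta$, so the inequality follows immediately by monotonicity. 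The main point is thus the $\ell = 2$ case, which is exactly what the choice of $\beta$ is engineered to make tight; beyond this, I anticipate no genuine obstacle.
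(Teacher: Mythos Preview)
Your proposal is correct and follows essentially the same approach as the paper: the paper likewise reduces to the three forests $F'$, $F''$, $F'''$ from Claim~\ref{c:clm4} of Theorem~\ref{t:thm1}, obtains the identical bound $\ell\beta^{n-\ell-2}+\beta^{n-\ell}+\beta^{n-\ell-3}$, and finishes with the inequality $\ell\beta+\beta^3+1\le\beta^{\ell+3}$ for $\ell\ge 2$, explicitly noting that this inequality is the reason for the choice of $\beta$. Your added justifications (no isolated vertex in $F'''$, and the inductive step for $\ell\ge 3$) are details the paper leaves implicit.
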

\begin{proof}[of Claim \ref{c:claim4}]
Suppose that $\ell=d_F(v)-1\ge 2$.
Arguing exactly as in the proof of Claim~\ref{c:clm4} 
in the proof of Theorem~\ref{t:thm1}
using the forests $F'$, $F''$, and $F'''$,
we obtain, by induction,
\begin{eqnarray*}
\sharp\gamma_t(F) & \le & \ell \cdot \sharp\gamma_t(F')+\sharp\gamma_t(F'')+\sharp\gamma_t(F''')\\
& \le & \ell   \beta^{n-\ell-2} + \beta^{n-\ell} +\beta^{n-\ell-3}\\
& = & \beta^{n}\beta^{-\ell-3}\left(\ell  \beta+\beta^3+1\right)\\
& \le & \beta^n,
\end{eqnarray*}
where we use $\ell\beta+\beta^3+1\le \beta^{\ell+3}$ for all $\ell\ge 2$;
in fact, this inequality is the reason for the specific choice of $\beta$.
\end{proof}

By Claim~\ref{c:claim4},
we may assume that $d_F(v)=2$, 
for otherwise the desired result holds.

\begin{claim}\label{c:claim5}
If $x$ is a support vertex, then $\sharp\gamma_t(F)  \le \beta^n$.
\end{claim}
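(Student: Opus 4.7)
The plan is to mimic the proof of Claim~\ref{c:clm5} from Theorem~\ref{t:thm1} essentially verbatim, simply rerunning the arithmetic in the $\beta^n$ bookkeeping used throughout the current theorem.

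First I would note that, since $x$ is a support vertex, both $v$ and $x$ belong to every $\gamma_t$-set of $F$. I would then introduce the same two auxiliary forests appearing in Claim~\ref{c:clm5}, namely $F' = F - V_w$ (of order $n-3$) and $F'' = F - (N_F[v] \cup N_F[x])$ (of order at most $n-5$), and partition the $\gamma_t$-sets of $F$ into three types according to whether they (a) contain $u$ but not $w$, (b) contain $w$ together with at least one other neighbor of $x$, or (c) contain $w$ but no other neighbor of $x$. The counting argument of Claim~\ref{c:clm5}, together with the verification that whenever the corresponding type of $\gamma_t$-set actually occurs the relevant auxiliary forest has no isolated vertex, carries over unchanged and yields
$$\sharp\gamma_t(F) \leq 2\sharp\gamma_t(F') + \sharp\gamma_t(F'').$$
Applying the induction hypothesis to $F'$ and $F''$ gives
$$\sharp\gamma_t(F) \leq 2\beta^{n-3} + \beta^{n-5} = \beta^n\cdot\beta^{-5}\bigl(2\beta^2 + 1\bigr).$$

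The only remaining step is to verify the algebraic inequality $2\beta^2 + 1 \leq \beta^5$. Using the defining equation $\beta^5 = 2\beta + \beta^3 + 1$, this reduces to $2\beta^2 \leq 2\beta + \beta^3$, and dividing by $\beta > 0$ leaves $\beta^2 - 2\beta + 2 \geq 0$, which is trivially true.

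The one point I would be genuinely worried about in advance is that $\beta$ was tuned precisely to make the analogous inequality in Claim~\ref{c:claim4} tight, so the present inequality could a priori have been the tighter one and forced a larger constant. Confirming that it is in fact slack—and by a comfortable margin—is therefore the only real hurdle; the computation above settles it without needing to adjust $\beta$. Beyond that I foresee no obstacle, since the structural part of the argument is already packaged in the corresponding claim of Theorem~\ref{t:thm1}.
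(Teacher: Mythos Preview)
Your proposal is correct and follows exactly the paper's own proof, which simply refers back to the forests $F'$ and $F''$ of Claim~\ref{c:clm5}, derives $\sharp\gamma_t(F)\le 2\beta^{n-3}+\beta^{n-5}$, and invokes $2\beta^2+1\le\beta^5$. Your explicit verification of that inequality via the defining relation $\beta^5=2\beta+\beta^3+1$ is a nice addition that the paper omits.
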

\begin{proof}[of Claim \ref{c:claim5}]
Suppose that $x$ is a support vertex.
Arguing exactly as in the proof of Claim~\ref{c:clm5} 
in the proof of Theorem~\ref{t:thm1}
using the forests $F'$ and $F''$,
we obtain, by induction,
\begin{eqnarray*}
\sharp\gamma_t(F) & \le & 2 \sharp\gamma_t(F')+\sharp\gamma_t(F'')
\le 2 \beta^{n-3} + \beta^{n-5}
= \beta^{n}\beta^{-5}(2\beta^2+1)
\le \beta^{n},
\end{eqnarray*}
where we use $2\beta^2+1\le \beta^5$.
\end{proof}

By Claim~\ref{c:claim5}, 
we may assume that $x$ is not a support vertex, 
for otherwise the desired result holds.

\begin{claim}\label{c:claim6}
If $x$ has a child that is a support vertex, then $\sharp\gamma_t(F) \leq \beta^n$.
\end{claim}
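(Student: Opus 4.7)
My plan is to mirror the argument of Claim~\ref{c:clm6} from the proof of Theorem~\ref{t:thm1}, now using the constant $\beta$ in place of $1+\sqrt{2}$. Suppose $x$ has a child $w'$ that is a support vertex. I would first observe that $w'$ is distinct from $w$: since $d_F(w)=2$ with neighbors $v$ and $x$, and since $v$ has degree~$2$ (not a leaf), the vertex $w$ is not a support vertex, whereas $w'$ is. Consequently, $w'$ belongs to every $\gamma_t$-set of $F$.

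Next I would introduce the forest $F' = F - V_w$, where $V_w = \{u,v,w\}$, so that $n' = n-3$. This forest has no isolated vertex, because the only vertex of $V(F')$ losing a neighbor under the deletion is $x$, and $x$ still retains $w'$ as a neighbor in $F'$. I would then argue $\gamma_t(F') = \gamma_t - 2$ by analyzing the structure of any $\gamma_t$-set $D$ of $F$: the support vertex $v$ must lie in $D$; exactly one of $u,w$ lies in $D$, because $v$ requires a neighbor in $D$ for total domination but including both would contradict minimality (note that $w$ is not needed to dominate $x$, since $w' \in D$ already does so); and restricting $D$ to $V(F')$ yields a total dominating set of $F'$ of size $\gamma_t - 2$.

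The main step is the $2$-to-$1$ correspondence $\sharp\gamma_t(F) = 2\,\sharp\gamma_t(F')$: for each $\gamma_t$-set $D'$ of $F'$ (which automatically contains $w'$, hence dominates $x$), both $D' \cup \{u,v\}$ and $D' \cup \{v,w\}$ are $\gamma_t$-sets of $F$, and every $\gamma_t$-set of $F$ arises from exactly one such extension. Given this correspondence, induction yields
$$\sharp\gamma_t(F) = 2\,\sharp\gamma_t(F') \le 2\beta^{n-3} \le \beta^n,$$
where the final inequality uses $2 \le \beta^3$; since $\beta \approx 1.4865$ gives $\beta^3 \approx 3.285$, this holds comfortably.

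The only delicate point, and thus the main thing to verify carefully, is that the correspondence is genuinely $2$-to-$1$: one must rule out a $\gamma_t$-set of $F$ omitting both $u$ and $w$ (impossible, since then $v$ would have no neighbor in $D$) and confirm that each extension yields a valid $\gamma_t$-set of the correct size. Beyond this, the argument is a direct translation of Claim~\ref{c:clm6}, and no further obstacles arise.
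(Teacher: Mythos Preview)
Your proof is correct and follows essentially the same argument as the paper: defining $F'=F-V_w$, establishing the exact identity $\sharp\gamma_t(F)=2\,\sharp\gamma_t(F')$, and concluding via induction together with $2<\beta^3$. You supply more justification than the paper (which simply refers back to Claim~\ref{c:clm6} of Theorem~\ref{t:thm1}), and your computation of $\gamma_t(F')=\gamma_t-2$ is correct but not actually needed here, since the induction for Theorem~\ref{t:thm2} only uses the order $n'=n-3$.
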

\begin{proof}[of Claim \ref{c:claim6}]
Suppose that $x$ has a child $w'$ that is a support vertex.
Arguing exactly as in the proof of Claim~\ref{c:clm6} 
in the proof of Theorem~\ref{t:thm1}
using the forest $F'$, 
we obtain, by induction,
\begin{eqnarray*}
\sharp\gamma_t(F)&=&2\sharp\gamma_t(F')
\leq 2 \beta^{n-3}
= \beta^{n} 2\beta^{-3}
\leq \beta^{n},
\end{eqnarray*}
where we use $2<\beta^3$.
\end{proof}

Now, arguing exactly 
as at the end of the proof of Theorem~\ref{t:thm1}
using the forests $F'$, $F''$, and $F'''$,
we obtain, by induction,
\begin{eqnarray*}
\sharp\gamma_t(F) & \le & 2^q \sharp\gamma_t(F')+(2^q-1)\sharp\gamma_t(F'')+2^q \sharp\gamma_t(F''')\\
& \le & 2^q  \beta^{n-3q-1} + (2^q-1) \beta^{n-3q-2} +2^q \beta^{n-3q-3}\\
& = & \beta^{n}\beta^{-3q-3}\left(2^q\beta^2+(2^q-1)\beta+2^q\right)\\
& \le & \beta^{n}\beta^{-3q-3}2^q\left(\beta^2+\beta+1\right)\\
& \le & \beta^n,
\end{eqnarray*}
where we use 
$2^q \left(\beta^2+\beta+1\right)\le \beta^{3q+3}$ for all $q\ge 1$. 
\end{proof}


\begin{thebibliography}{99}

\bibitem{aldamora} J.D. Alvarado, S. Dantas,  E. Mohr, and D. Rautenbach, On the maximum number of minimum dominating sets in forests, Discrete Mathematics 342 (2019) 934--942.
\bibitem{bi} A. Bie\'{n}, Properties of gamma graphs of trees, presentation at the 17th Workshop on Graph Theory Colourings, Independence and Domination (CID 2017), Piechowice, Poland.
\bibitem{brsk} D. Br\'{o}d and Z. Skupie\'{n}, Trees with extremal numbers of dominating sets, The Australasian Journal of Combinatorics 35 (2006) 273--290.
\bibitem{cogagokake} S. Connolly, Z. Gabor, A. Godbole, B. Kay, and T. Kelly, Bounds on the maximum number of minimum dominating sets, Discrete Mathematics 339 (2016) 1537--1542.
\bibitem{fogrpyst} F.V. Fomin, F. Grandoni, A.V. Pyatkin, and A.A. Stepanov, Bounding the number of minimal dominating sets: a measure and conquer approach, Lecture Notes in Computer Science 3827 (2005) 573--582.
\bibitem{frhehehu} G.H. Fricke, S.M. Hedetniemi, S.T. Hedetniemi, and K.R. Hutson, $\gamma$-graphs of graphs, Discussiones Mathematicae Graph Theory 31 (2011) 517--531.
\bibitem{hahesl} T.W. Haynes, S.T. Hedetniemi, and P. Slater, Fundamentals of domination in graphs, Marcel Dekker, Inc., New York, 1998.
\bibitem{heye} M.A. Henning and A. Yeo, Total Domination in Graphs, Springer 2013.
\bibitem{krwa} M. Krzywkowski and S. Wagner, Graphs with few total dominating sets, Discrete Mathematics 341 (2018) 997--1009.
\bibitem{kogodo} K.M. Koh, C.Y Goh, and F.M. Dong, The maximum number of maximal independent sets in unicyclic connected graphs, Discrete Mathematics 308 (2008) 3761--3769.
\bibitem{sk} Z. Skupie\'{n}, Majorization and the minimum number of dominating sets, Discrete Applied Mathematics 165 (2014) 295--302.
\bibitem{wa} S. Wagner, A note on the number of dominating sets of a graph, Utilitas Mathematica 92 (2013) 25--31.
\bibitem{wl} I. W\l och, Trees with extremal numbers of maximal independent sets including the set of leaves, Discrete Mathematics 308 (2008) 4768--4772.
\bibitem{zi} J. Zito, The structure and maximum number of maximum independent sets in trees, Journal of Graph Theory 15 (1991) 207--221.
\end{thebibliography}
\end{document}